\newtheorem{theorem}{Theorem}[section]
\newtheorem{lemma}[theorem]{Lemma}
\theoremstyle{definition}
\newtheorem{definition}[theorem]{Definition}
\newtheorem{corollary}[theorem]{Corollary}
\newtheorem{proposition}[theorem]{Proposition}
\theoremstyle{remark}
\newtheorem{claim}[theorem]{{\bf Claim}}
\numberwithin{equation}{section}
\begin{document}

\def\C{\mathbb C}
\def\R{\mathbb R}
\def\X{\mathbb X}
\def\Z{\mathbb Z}
\def\Y{\mathbb Y}
\def\Z{\mathbb Z}
\def\N{\mathbb N}
\def\cal{\mathcal}
\def\cD{\cal D}
\def\tD{\tilde{{\cal D}}}
\def\F{\cal F}
\def\tf{\tilde{f}}
\def\tg{\tilde{g}}
\def\tu{\tilde{u}}

\def\cal{\mathcal}
\def\b{\mathcal B}
\def\c{\mathcal C}
\def\cc{\mathbb C}
\def\x{\mathbb X}
\def\r{\mathbb R}
\def\T{\mathbb T}
\def\uu{(U(t,s))_{t\ge s}}
\def\vv{(V(t,s))_{t\ge s}}
\def\xx{(X(t,s))_{t\ge s}}
\def\yy{(Y(t,s))_{t\ge s}}
\def\zz{(Z(t,s))_{t\ge s}}
\def\ss{(S(t))_{t\ge 0}}
\def\tt{(T(t,s))_{t\ge s}}
\def\rr{(R(t))_{t\ge 0}}
\title[Diffusive Delay in Lotka Volterra Models]{\textbf{EXISTENCE OF TRAVELING WAVES OF LOTKA VOLTERRA TYPE MODELS WITH DELAYED DIFFUSION TERM  AND PARTIAL QUASIMONOTONICITY  }}

\author[Barker]{William Barker}
\address{Department of Mathematics and Statistics, University of Arkansas at Little Rock, 2801 S University Ave, Little Rock, AR 72204. USA}
\email{wkbarker@ualr.edu}

\thanks{}

\date{\today}
\subjclass[2000]{Primary: 35C07 ; Secondary: 35K57 }
\keywords{Traveling waves; Reaction-diffusion equations; Delay; Lokta-Volterra Equations}
\begin{abstract} This paper is concerned with the existence of traveling wave solutions for diffusive two-species Lotka-Volterra  systems with delay in both the reaction and diffusion terms without monotonicity. We extend the partial or cross monotone iteration method to systems that satisfy the partial quasi-monotone condition via construction appropriate upper and lower solutions . This is done by using Schauder’s fixed point theorem. 
\end{abstract}

\maketitle
\section{Introduction}
\noindent Nonlinear parabolic partial differential equations play a vital role in mathematical modeling for engineering as well as the physical and biological sciences. The specific type of parabolic equations that we are interested in are called reaction diffusion equations. When no delay is present there is a myriad of scholarly work. In particular, there are very elegant and classical results  for traveling wave front solutions of PDE, see  \cite{Dalecki,fif,Fisher,Kolmogorov,Murray1,Murray2,HSmith,volvolvol}.  

\medskip
\noindent 
The first work that studied the existence and properties of traveling waves in diffusion-reaction equations with delay in reaction term is attributed to Schaaf, \cite{sch}. The methods  used by Schaaf were phase space
analysis and the maximum principle for parabolic functional differential equations for a Fisher type nonlinearity. 

\medskip
\noindent
The seminal work for constructing monotone wave fronts for reaction diffusion is the paper by Wu and Zou \cite{wuzou}. Wu and Zou studied the existence of traveling wave solutions with a singular delay in the reaction term. The existence of such solutions was established under a quasimonotone or exponential quasimonotone property  via an iteration of appropriate upper and lower solutions. This is the so-called monotone iteration method. 

\medskip
\noindent
The above results were extended by Ma, \cite{ma} by employing the Schauder fixed point theorem using the decay norm. Moreover, Ma developed so-called super and sub solutions. This formulation relaxes the requirements of the upper and lower solutions in Wu and Zou.

\medskip
\noindent
Boumenir and Nguyen, \cite{boumin} introduced the concept of quasi-upper/lower solutions, which removes the $C^2$ requirement for the initial construction of upper and lower solutions. It was also shown that the quasi solutions become upper and lower solutions after one iteration. For an overview of results for traveling waves of reaction diffusion equations with delay, see \cite{Al-Omari,boumin,fanshashi,Gour,hale,Huang,liumeiyan,ma,sch,HSmith,vol,volvolvol,Wei2,wuzou,wuzouerr,Zhao}, their progeny and references therein. 

\medskip
\noindent
An important model in Biology is the so called Lotka-Volterra diffusion-cooperation system. Traveling waves solutions are important in such models. The literature for traveling diffusion-cooperation models have been concerned with either no delay or delay in the reaction term. 
Lotka-Volterra models of the form \begin{align}\label{LVRD}
\frac{\partial u (x,t)}{\partial t}&=D_1\frac{\partial^2u(x,t)}{\partial x^2}+\alpha_1 u(x,t)\left(1-a_1u(x,t-\tau_1)-b_1v(x,t-\tau_2)\right) \\
\frac{\partial v (x,t)}{\partial t}&=D_2\frac{\partial^2v(x,t)}{\partial x^2}+\alpha_2v(x,t)\left(1-b_2v(x,t-\tau_3)-a_2u(x,t-\tau_4)\right). \nonumber
\end{align}
where, $u(x,t), v(x,t)$  represent the population densities of two competing species, $D_1, D_2$ are positive diffusion constants, $\alpha_1,\alpha_2$ is the respective intrinsic population growth for $u, v,$ and $a_1, b_1,a_2,b_2$ are positive constants have been studied in \cite{Huazou2,lilin,lvwang,pan,Feng,Ruan,Wangzhou} and references therein. 

\medskip
\noindent
It is important to mention that the standard monotone iteration and relaxed exponential monotone iteration methods fail to allow for the construction of upper/lower solutions. A different iteration method is required, which can be referred to partial quasi-monotone, cross, or mixed interaction method.

 \medskip
\noindent
The existence of traveling waves for systems of the form (\ref{LVRD}) was studied in Li, et al. \cite{lilin} via constructing weak upper and lower solutions and using the Schauder fixed point theorem.  Feng et al. \cite{Feng, Ruan} obtained similar results for ratio dependent multi-species Lotka-Volterra competition models.    

\medskip
\noindent
The main purpose of this paper is to establish the existence of traveling wave solutions of the following Lotka-Volterra competition-cooperation model with delay in the diffusion term 
\begin{align}\label{LV1}
\frac{\partial u (x,t)}{\partial t}&=D_1\frac{\partial^2u(x,t-\tau_1)}{\partial x^2}+\alpha_1 u(x,t)\left(1-a_1u(x,t)-b_1v(x,t-\tau_2)\right) \\
\frac{\partial v (x,t)}{\partial t}&=D_2\frac{\partial^2v(x,t-\tau_3)}{\partial x^2}+\alpha_2v(x,t)\left(1-b_2v(x,t)-a_2u(x,t-\tau_4)\right). \nonumber
\end{align}
 Here, $u(x,t), v(x,t)$  represent the population densities of two competing species, $D_1, D_2$ are positive diffusion constants, $\alpha_1,\alpha_2$ is the respective intrinsic population growth for $u, v,.$ The constants $a_1, b_1,a_2,b_2$ are positive and the values $\tau_i, i=1,2,3,4 $ are positive time delays. 

 \medskip
\noindent
In order to use a monotone or partial monotone iteration method the positiveness of the unique bounded solutions needs to be established. The question of placing delay in the diffusion term has been difficult to answer. In Barker and Nguyen, \cite{BarkNguy} it was shown that the second order delay equation of the form $x''(t)-ax'(t+r)-bx(t+r)=f(t)$, where $a\not=0, b, r>0$ has a unique bounded solution for each given bounded and continuous $f(t)$. Moreover, if $r>0$ is sufficiently small and $f(t)\ge 0$ for $t\in\R$, then the unique bounded solution $x_f(t)\le 0$ for all $t\in\R$. This was done by using complex analysis to determine properties of the characteristic equation, for more information see \cite{cookegrossman,die,hale,mal,murnaimin,pru,Wei1}, and references therein. Our question seems to differ from the standard study where delay in the diffusion term is not considered, see\cite{Al-Omari,boumin,fanshashi,Gour,hale,Huang,Huazou2,lilin,liumeiyan,lvwang,ma,pan,sch,HSmith,Feng,Ruan,vol,volvolvol,Wei2,Wangzhou,wuzou,wuzouerr,Zhao}. 

\medskip
\noindent
The paper will be organized as follows: in Section 2 we will put forth preliminary information that will be utilized throughout the paper. In Section 3, we will show the existence of traveling waves for the system (\ref{LV1}) via Schauder's fixed point theorem in a Banach space equipped with exponential decay norm. This will be aided by partial monotone conditions that will be placed on appropriate upper and lower solutions. 

\medskip
\noindent
It will also be shown that the existence of quasi upper and lower solutions imply the existence of smooth upper and lower solutions.  This will also imply the existence of quasi upper and lower solutions is a sufficient condition for the existence of traveling waves.  In Section 4, we will construct quasi upper and lower solutions to an interesting Lotka-Volterra  competition model for two species. 

\section{Preliminaries}
\noindent 
In this paper we will use some standard notations as $\R^n,\C^n$ for the fields of reals and complex numbers in $n$ dimensions. We also take standard ordering for $\R^2$. This means $u=(u_1,u_2)^T$ and $v=(v_1,v_2)^T.$ We say that $u\le v$ if $u_1\le v_1, \ u_2\le v_2.$ We also say that $u<v$ if $u_1<v_1, \ u_2<v_2.$ We also take $|\cdot|$ to be the Euclidean norm in $\R^2.$ 

\medskip
\noindent
The space of all bounded and continuous functions from $\R  \to \R^n$ is denoted by $BC(\R,\R^n)$ which is equipped with the sup-norm $|| f|| := \sup_{t\in\R} \| f(t)\|,$ where $f(t)\in C\left(U, \R^2\right)$, where $U\subset \R$. $BC^k(\R,\R^n)$ stands for the space of all $k$-time continuously differentiable functions $\R\to\R^n$ such that all derivatives up to order $k$ are bounded.

\medskip
\noindent
If the boundedness is dropped from the above function spaces we will simply denote them by $C(\R,\R^n)$ and $C^k(\R,\R^n)$. For $f,g\in BC(\R,\R^n)$ we will use the natural order $f\le g$ if and only if $f(t)\le g(t)$ for all $t\in \R$. A constant function $f(t)=\alpha$ for all $t\in \R$ will be denoted by $\hat \alpha.$ 

\medskip
\noindent
Moreover, we note that $f_{ic}(\phi_{\theta},\psi_{\theta}): \X_{c\tau} = C\left([-\tau,0], \R^2\right)\to \R$ is defined by 
\[f_{ic}(\phi,\psi)= f_i\left(\phi^c,\psi^c\right), \ \phi^c(\theta)=\phi(c\theta), \ \psi^c(\theta)=\psi(c\theta), \ \theta \in [\tau,0], \ i=1,2.  \]
This allows us to see that the system (\ref{LV1}) under a wave front transformation can be written as
\begin{align}\label{LV3}
&D_1\phi''(t)-c\phi'(t+r_1)+f_{1c}(\phi_t, \psi_t)=0 \\
&D_2\psi''(t)-c\psi'(t+r_3)+f_{2c}(\phi_t, \psi_t)=0, \nonumber
\end{align}
where the delay has been moved out of the higher order term via an invariant translation. We also implement the following asymptotic boundary conditions
\begin{alignat*}{2}
&\lim_{t\to -\infty}\phi(t)=0,\ &&\lim_{t\to \infty}\phi(t)= k_1\\
&\lim_{t\to -\infty}\psi(t)=0,\ &&\lim_{t\to \infty}\psi(t)= k_2,
\end{alignat*}
where $0<k_1\le 1, \ 0<k_2\le 1.$ The following conditions for the reaction terms are employed
\begin{enumerate}[label=(C\arabic*)]
\item $f(\hat{0})=f(\hat{K})=0,$ where $K=(k_1,k_2)$ 
\item There are two Lipchitz constants $L_1>0, \ L_2>0$ such that 
\begin{align*}
 &\left|f_1(\phi_1,\psi_1)-f_1(\phi_2,\psi_2)\right|\le L_1 \left|\left|\Phi-\Psi\right|\right|  \\
 &\left|f_2(\phi_1,\psi_1)-f_2(\phi_2,\psi_2)\right|\le L_2 \left|\left|\Phi-\Psi\right|\right|,
\end{align*}
\end{enumerate}
for $\Phi =(\phi_1,\psi_1), \ \Psi =(\phi_2,\psi_2) \in C\left([\tau,0], \R\right)),$ and $0\le \Phi(s),\Psi(s)\le K.$

\section{The Case of Partial Quasimonotonicity}

\subsection{Partial Quasimonotonicity Conditions}
In this section, we develop sufficient conditions for the existence of traveling wave solutions for equations of the form (\ref{LV3}). We invoke the following cross iteration scheme.
\begin{definition}{Partial Quasi-Monotone Condition (PQM)}
Fix two constants $\beta_1, \ \beta_2>0$ such that 
\begin{enumerate}[label=(P\arabic*)]
\item $f_{1}\left(\phi_1,\psi_1\right)-f_{1}\left(\phi_2,\psi_1\right)+\beta_1\left[\phi_1(0)-\phi_2(0)\right]\ge 0,$
\item $f_{1}\left(\phi_1,\psi_1\right)-f_{1}\left(\phi_1,\psi_2\right)\le 0,$
\item $f_{2}\left(\phi_1,\psi_1\right)-f_{2}\left(\phi_2,\psi_2\right)+\beta_2\left[\psi_1(0)-\psi_2(0)\right]\ge 0.$
\item $f_{2}\left(\phi_1,\psi_1\right)-f_{2}\left(\phi_2,\psi_1\right)\le 0,$
\end{enumerate}
where $\phi_1,\ \phi_2, \ \psi_1,\ \psi_2 \in C(\R,\R),\ 0\le \phi_2\le \phi_1\le k_1,\ 0\le \psi_2\le \psi_1\le k_2.$ 
\end{definition}

\medskip
\noindent
Let $\beta_1, \ \beta_2$ be the constants in (PQM), define the following operators 
\begin{align}\label{H}
&H_1(\phi,\psi)(t)=f_{1c}\left(\phi_t,\psi_t\right)+\beta_1\phi(t+r_1), \ \phi, \psi \in C(\R,\R)  \\
&H_2(\phi,\psi)(t)=f_{2c}\left(\phi_t,\psi_t\right)+\beta_2\psi(t+r_3), \ \phi, \psi \in C(\R,\R). 
\end{align}
The operators $H_1, H_2$ satisfy the following properties:
\begin{lemma}\label{lemH}
Assume $(C1)$ and (PQM) hold, then 
\begin{enumerate}
    \item $H_1(\phi_2,\psi_1)(t)\le H_1(\phi_1,\psi_1)(t)$
    \item $H_1(\phi_1,\psi_1)(t)\le H_1(\phi_1,\psi_2)(t)$
    \item $H_2\left(\phi,\psi\right)(t)\ge 0$
    \item $H_2\left(\phi,\psi\right)(t)$ is non-decreasing for $t\in \R$
    \item $H_2\left(\phi_2,\psi_2\right)(t)\le H_2\left(\phi_1,\psi_1\right)(t).$
\end{enumerate}    
\end{lemma}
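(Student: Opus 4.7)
The plan is to establish each of the five assertions by direct algebraic decomposition of the operators $H_1, H_2$ defined in \eqref{H}, reducing each resulting difference to the appropriate inequality among (P1)--(P4) and invoking (C1) for the sign statement.

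For (1) and (2) I would compute
\begin{align*}
H_1(\phi_1,\psi_1)(t) - H_1(\phi_2,\psi_1)(t) &= \bigl[f_{1c}(\phi_{1,t},\psi_{1,t}) - f_{1c}(\phi_{2,t},\psi_{1,t})\bigr] + \beta_1\bigl[\phi_1(t+r_1) - \phi_2(t+r_1)\bigr], \\
H_1(\phi_1,\psi_1)(t) - H_1(\phi_1,\psi_2)(t) &= f_{1c}(\phi_{1,t},\psi_{1,t}) - f_{1c}(\phi_{1,t},\psi_{2,t}),
\end{align*}
noting that the $\beta_1$ contribution cancels in the second display because the first argument of $H_1$ is unchanged. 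Applying (P1) to the shifted segments $\phi_{i,t}^c,\psi_{1,t}^c \in C([-\tau,0],\R)$ and using $\phi_2 \le \phi_1$ throughout $\R$ renders the first difference non-negative, while (P2) renders the second non-positive. Assertion (5) then follows from the analogous decomposition of $H_2(\phi_1,\psi_1) - H_2(\phi_2,\psi_2)$ combined with (P3).

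For assertion (3) I would take $(\phi_2,\psi_2)=(\hat{0},\hat{0})$ in the reasoning for (5) and invoke (C1) in the form $f(\hat{0})=0$, together with $\psi(t+r_3) \ge 0$, to conclude $H_2(\phi,\psi)(t) \ge 0$. Assertion (4) -- the $t$-monotonicity of $H_2(\phi,\psi)$ -- will use the monotonicity of $\phi,\psi$ that is standing in the cross-iteration context: for $t_1 < t_2$ the segments satisfy $\phi_{t_1} \le \phi_{t_2}$, $\psi_{t_1} \le \psi_{t_2}$ and $\psi(t_1 + r_3) \le \psi(t_2 + r_3)$, so applying (5) with these two shifts gives $H_2(\phi,\psi)(t_1) \le H_2(\phi,\psi)(t_2)$.

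The principal obstacle will be carefully matching the shift $r_1$ (respectively $r_3$) appearing in the $\beta_i$-term of $H_i$ with the ``evaluation at $\theta=0$'' prescribed in (P1) and (P3): the pointwise PQM inequality controls the quantity $\phi_i(t)$ via $\phi_{i,t}^c(0)$, whereas the $\beta_1$ contribution to $H_1$ evaluates at $\phi_i(t+r_1)$. The resolution is to exploit the global ordering $\phi_2 \le \phi_1$ on all of $\R$, not a pointwise-in-$t$ bound, and a parallel observation is needed for the $t$-monotonicity claim in (4), where the cross-iteration standing assumption of $t$-monotone iterates must be made explicit.
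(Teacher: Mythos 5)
Your proposal follows essentially the same route as the paper's proof: the identical direct decompositions of $H_1$ and $H_2$, reduced to (P1) for (1), (P2) for (2), (P3) for (5), with (C1) giving the sign in (3) and the standing monotonicity of $\phi,\psi$ giving (4); deducing (3) and (4) from (5) (via $(\hat{0},\hat{0})$ and time shifts) is only a cosmetic repackaging of the paper's direct computations. The shift mismatch you flag --- the $\beta_i$-term evaluated at $t+r_i$ versus the evaluation at $\theta=0$ in (P1)/(P3) --- is genuinely present, but the paper's own proof simply invokes (P1)/(P3) across it without comment, so your treatment is no less complete than the original.
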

\begin{proof}
Looking towards $i.).$ By (PQM) it is clear
\begin{align*}
  &H_1(\phi_2,\psi_1)(t)- H_1(\phi_1,\psi_1)(t)=  f_{1c}\left(\phi_{2t},\psi_{1t}\right)+\beta_1\phi_2(t+r_1)-\left(f_{1c}\left(\phi_{1t},\psi_{1t}\right)+\beta_1\phi_1(t+r_1)\right)\\
 & =f_{1c}\left(\phi_{2t},\psi_{1t}\right)-f_{1c}\left(\phi_{1t},\psi_{1t}\right)+\beta_1\left(\phi_2(t+r_1)-\phi_1(t+r_1)\right)\\
 &=-\left[f_{1c}\left(\phi_{1t},\psi_{1t}\right)-f_{1c}\left(\phi_{2t},\psi_{1t}\right)+\beta_1\left(\phi_1(t+r_1)-\phi_2(t+r_1)\right)\right]\le 0,
\end{align*}
by (P1) pf (PQM). Thus, $H_1(\phi_2,\psi_1)(t)- H_1(\phi_1,\psi_1)(t)\le 0$. This finishes $i.)$ We will show $ii.)$ via direct computation as well. Indeed, 
\begin{align*}
&H_1(\phi_1,\psi_1)(t)- H_1(\phi_1,\psi_2)(t)=f_{1c}\left(\phi_{1t},\psi_{1t}\right)+\beta_1\phi_1(t+r_1)-\left(f_{1c}\left(\phi_{1t},\psi_{2t}\right)+\beta_1\phi_1(t+r_1)\right)\\
&= f_{1c}\left(\phi_{1t},\psi_{1t}\right)-f_{1c}\left(\phi_{1t},\psi_{2t}\right)\le 0
\end{align*}
by (P2) of (PQM). This proves $ii.)$ Part $iii.)$ follows directly from $(C_1)$ and each $\phi,\psi,\beta_2\ge 0.$ In order to prove $iv.)$ we fix $t\in \R, s>0,$ then 
\begin{align*}
& 0\le \phi_t\le \phi_{t+s}\le k_1\\
& 0\le \psi_t\le \psi_{t+s}\le k_2
\end{align*}
due to the fact that both $\phi$ and $\psi$ are non-decreasing. From here, we have
\begin{align*}
&H_2\left(\phi,\psi\right)(t+s)- H_2\left(\phi,\psi\right)(t)=  f_{2c}\left(\phi_{t+s},\psi_{t+s}\right)+\beta_2\phi(t+s)-\left(f_{2c}\left(\phi_{t},\psi_{t}\right)+\beta_2\phi(t)\right)\\
&=f_{2c}\left(\phi_{t+s},\psi_{t+s}\right)-f_{2c}\left(\phi_{t},\psi_{t}\right)+\beta_2\left[\phi(t+s)-\phi(t)\right]\\
&=f_{2c}\left(\phi_{t+s},\psi_{t+s}\right)-f_{2c}\left(\phi_{t},\psi_{t}\right)+\beta_2\left[\phi_{t+s}(0)-\phi_{t}(0)\right]\ge 0
\end{align*}
by (P3) of (PQM). This proves $iv.)$ Part $v.)$ follows directly from (P3) of (PQM). This completes the proof.
\end{proof}

\medskip
\noindent
We can write the system (\ref{LV3}) for all $t\in \R$ as
\begin{align}\label{LV4}
&D_1\phi''(t)-c\phi'(t+r_1)-\beta_1\phi(t+r_1) +H_1\left(\phi,\psi\right)(t)=0  \\
&D_2\psi''(t)-c\psi'(t+r_3)-\beta_2\psi(t+r_3)+H_2\left(\phi,\psi\right)(t)=0. \nonumber
\end{align}

\subsection{Existence of Traveling Wave Solutions} 
In this section we will consider existence of positive solutions for the system (\ref{LV4}). In \cite{BarkNguy}, it was shown that equations of the form $x''(t)-ax'(t+r)-bx(t+r)=f(t)$ have positive solutions where $r\in \R, \ f\in BC\left(\R, \R\right).$ It was also noted that these results can be extended to higher dimensions. 

\medskip
\noindent
In order to deal with the nonlinear part, we will show the existence of fixed point(s) by finding a set of appropriate  convolution operators and applying Schrauder's fixed point theorem.
Defining the set 
\begin{align}
\Gamma_1=\left\{\left(\phi,\psi\right)\in C\left(\R,\R^2\right): \left(0,0\right)\le\left(\phi,\psi\right)\le \left(k_1,k_2\right) \right\}    
\end{align}
we rewrite Eq.(\ref{LV4}) in the form
\begin{equation}\label{fixed point1}
( \phi, \psi)^T = -{\cal L}^{-1}H(\phi,\psi)=\left(-{\cal L}_1^{-1}H_1(\phi,\psi),-{\cal L}_2^{-1}H_2(\phi,\psi)\right)^T , \ \phi,\psi \in \Gamma_1 ,
\end{equation}
Taking $F:=-{\cal L}^{-1}H=-\left(F_1H_1(\phi,\psi),F_2H_2(\phi,\psi)\right)^T.$ Thus, we can use the following Perron-Lyapunov integral operator  \[F=\left(F_1\left(\phi,\psi\right),F_2\left(\phi,\psi\right)\right):\Gamma_1\left(\R,\R^2\right)\to C\left(\R,\R^2\right)\]  defined by 
\begin{align}
F_1\left(\phi,\psi\right)&=\int^\infty_{-\infty} G_1(t-s,r)H_1(\phi(s),\psi(s))ds\\
F_2\left(\phi,\psi\right)&=\int^\infty_{-\infty} G_2(t-s,r)H_2(\phi(s),\psi(s))ds. \nonumber
\end{align}
Here, we have positive constants $M_1, M_2, \delta_1, \delta_2$ that are uniformly bounded in $r$ for some sufficiently small $r>0$  such that for all $t\in \R$
\begin{align*}
&|G_1(t,r)| \le M_1e^{-\delta_1 |t|}\\
&|G_2(t,r)| \le M_2e^{-\delta_2 |t|}.
\end{align*}
See  Theorem  4.1 \cite{mal}.
\begin{lemma} Define $F=\left(F_1\left(\phi,\psi\right),F_2\left(\phi,\psi\right)\right)$ as above, then for any $\left(\phi,\psi\right)\in \Gamma_1,$ then  
\begin{enumerate}
\item$\left(F_1\left(\phi,\psi\right),F_2\left(\phi,\psi\right)\right):\Gamma_1\left(\R,\R^2\right)\to C\left(\R,\R^2\right)$ is well defined.
\item $F_1\left(\phi,\psi\right),F_2\left(\phi,\psi\right)$ satisfy 
\begin{align}
&D_1F_1^{\prime \prime}\left(\phi,\psi\right) -cF_1^{\prime}\left(\phi,\psi\right) -\beta_1 F_1\left(\phi,\psi\right) +H_1\left(\phi,\psi\right)=0\\
&D_2F_2^{\prime \prime}\left(\phi,\psi\right) -cF_2^{\prime}\left(\phi,\psi\right) -\beta_2 F_2\left(\phi,\psi\right) +H_2\left(\phi,\psi\right)=0. \nonumber
\end{align}
\end{enumerate}
\end{lemma}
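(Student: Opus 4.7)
\medskip
\noindent The plan is to handle the two items separately, leaning on the exponential decay bounds for the kernels $G_i$ and on the uniform boundedness of $H_i$ over $\Gamma_1$.

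\medskip
\noindent For part (i), the first step is to verify that $H_1(\phi,\psi)$ and $H_2(\phi,\psi)$ are bounded on $\R$ uniformly in $(\phi,\psi)\in\Gamma_1$. Since $(0,0)\le(\phi,\psi)\le(k_1,k_2)$, the Lipschitz hypothesis (C2) combined with $f(\hat 0)=0$ from (C1) yields $|f_{ic}(\phi_t,\psi_t)|\le L_i\,\|(k_1,k_2)\|$, so $|H_i(\phi,\psi)(t)|\le L_i\,\|(k_1,k_2)\|+\beta_i k_i =: N_i$. The exponential bound $|G_i(t,r)|\le M_i e^{-\delta_i|t|}$ then dominates the integrand by $N_i M_i e^{-\delta_i|t-s|}$, which is integrable in $s$. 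Hence the integral defining $F_i$ converges absolutely for every $t\in\R$ with $|F_i(\phi,\psi)(t)|\le 2N_i M_i/\delta_i$, and continuity in $t$ follows from dominated convergence applied to the translates $s\mapsto G_i(t-s,r)H_i(\phi,\psi)(s)$. This shows that $F=(F_1,F_2)$ is well defined as a map from $\Gamma_1$ into $C(\R,\R^2)$.

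\medskip
\noindent For part (ii), the key point is that $G_i(\cdot,r)$ is the bounded Green's function associated with the constant-coefficient operator $\mathcal{L}_i u:=D_i u''-c u'-\beta_i u$, so that $\mathcal{L}_i G_i=-\delta_0$ in the distributional sense; this is exactly the content invoked from Theorem 4.1 of \cite{mal} in the non-delayed limit captured by the statement of the lemma. Concretely, the characteristic equation $D_i\lambda^2-c\lambda-\beta_i=0$ has two real roots $\lambda_i^-<0<\lambda_i^+$ (their product equals $-\beta_i/D_i<0$), and the bounded Green's function takes the explicit form
\[
G_i(t)=\frac{-1}{D_i(\lambda_i^+-\lambda_i^-)}\begin{cases} e^{\lambda_i^+ t}, & t\le 0,\\ e^{\lambda_i^- t}, & t\ge 0,\end{cases}
\]
which decays at rate $\delta_i=\min(|\lambda_i^-|,\lambda_i^+)$ and is continuous at $0$ with the jump $[G_i'(t)]_{0^-}^{0^+}=-1/D_i$. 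I would then split
\[
F_i(\phi,\psi)(t)=\int_{-\infty}^{t}G_i(t-s,r)H_i(\phi,\psi)(s)\,ds+\int_{t}^{\infty}G_i(t-s,r)H_i(\phi,\psi)(s)\,ds
\]
and differentiate twice in $t$ by combining Leibniz's rule with the fundamental theorem of calculus; the first derivative is continuous in $t$ because $G_i$ itself is continuous at $0$, while the second derivative collects precisely one endpoint contribution $-H_i(\phi,\psi)(t)/D_i$ coming from the jump of $\partial_t G_i$ across $s=t$. Using $\mathcal{L}_i G_i=0$ pointwise away from zero and gathering the remaining terms, the identity $D_i F_i''-cF_i'-\beta_i F_i=-H_i(\phi,\psi)$ follows, which rearranges to the stated equation.

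\medskip
\noindent The main obstacle I expect is the bookkeeping at $s=t$: the second derivative of the convolution receives an endpoint contribution from the jump of $\partial_t G_i$ at zero, and the computation must be organized so that this jump produces exactly the $-H_i/D_i$ term needed to match the ODE. Once the Green's function of $\mathcal{L}_i$ is written out explicitly via $\lambda_i^\pm$, the verification reduces to a short direct calculation, and the exponential decay of $G_i$ and of $G_i'$ justifies differentiation under the integral sign without further hypotheses.
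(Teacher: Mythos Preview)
The paper does not prove this lemma; it simply calls the proof ``straight forward'' and omits it. Your argument supplies the missing details and is correct for part~(i): the uniform bound on $H_i$ via (C1)--(C2) together with the exponential decay of $G_i$ gives absolute convergence and continuity by dominated convergence, exactly as you describe.

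For part~(ii) there is a subtlety worth flagging. In the paper's setting the kernels $G_i(\cdot,r)$ are the Green's functions for the \emph{mixed-type} operators $u\mapsto D_i u''(t)-cu'(t+r_i)-\beta_i u(t+r_i)$, coming from \cite{BarkNguy} and Theorem~4.1 of \cite{mal}; they genuinely depend on the delay parameter $r$ and do not admit the two-exponential closed form you write down, which is only the $r=0$ special case. The lemma as printed drops the shifts $t\mapsto t+r_i$ on the first- and zeroth-order terms---almost certainly a transcription slip, since the surrounding system~(\ref{LV4}) and the fixed-point formulation~(\ref{fixed point1}) both carry the delay. Your Leibniz-plus-jump calculation is therefore a correct proof of the identity \emph{as literally stated}, but not of what the paper actually needs downstream. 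The intended argument is shorter and more abstract: by construction, convolution against $G_i(\cdot,r)$ is the bounded inverse of $\mathcal{L}_i$ on $BC(\R,\R)$ furnished by \cite{mal}, so $\mathcal{L}_i F_i=-H_i$ holds immediately without any explicit kernel formula or jump bookkeeping. You noticed the discrepancy (your ``non-delayed limit'' remark), but for the delayed operator the explicit-kernel route should be replaced by that citation.
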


\medskip
\noindent
The proof is straight forward, and is hence omitted. Moreover, if $\left(\phi,\psi\right)$ is a fixed point of $\left(F_1\left(\phi,\psi\right),F_2\left(\phi,\psi\right)\right),$ then the system (\ref{LV4}) has a traveling wave solution. To this end, $\left(F_1\left(\phi,\psi\right),F_2\left(\phi,\psi\right)\right),$ enjoys similar properties as $\left(H_1\left(\phi,\psi\right),H_2\left(\phi,\psi\right)\right),$ in Lemma (\ref{lemH}). Indeed,
\begin{lemma}\label{lemF}
Assume that $(C1)$ and (PQM) hold, then 
\begin{enumerate}
\item $F_2\left(\phi,\psi\right)(t)$ is non-decreasing for $t\in \R$
\item $F_1(\phi_2,\psi_1)(t)\le F_1(\phi_1,\psi_1)(t)$
\item $F_1(\phi_1,\psi_1)(t)\le F_1(\phi_1,\psi_2)(t)$
\item $F_2\left(\phi_2,\psi_2\right)(t)\le F_2\left(\phi_1,\psi_1\right)(t),$
\end{enumerate}    
when $\phi_1,\ \phi_2, \ \psi_1,\ \psi_2 \in C(\R,\R),\ 0\le \phi_2\le \phi_1\le k_1,\ 0\le \psi_2\le \psi_1\le k_2.$  
\end{lemma}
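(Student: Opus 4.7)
The plan is to reduce each of the four assertions to the corresponding assertion in Lemma \ref{lemH} by using the fact that $F_i$ is a convolution of the nonnegative Green's kernel $G_i(\cdot,r)$ against the pointwise-defined quantity $H_i(\phi,\psi)$. First I would record that, under the hypotheses of the lemma and for all $r>0$ sufficiently small, the kernels $G_1(\cdot,r)$ and $G_2(\cdot,r)$ can be taken nonnegative on $\R$; this is the content (not just the decay bound) of the construction cited from \cite{BarkNguy} and Theorem 4.1 of \cite{mal}, and it is precisely what makes the operator $-\mathcal{L}^{-1}$ positivity-preserving. With that in hand, any pointwise inequality between two admissible integrands passes through the integral unchanged.

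For statements (ii), (iii), (iv) I would simply quote parts (i), (ii), (v) of Lemma \ref{lemH}, multiply by the nonnegative kernel $G_i(t-s,r)$, and integrate in $s$. For instance, part (i) of Lemma \ref{lemH} gives $H_1(\phi_2,\psi_1)(s)\le H_1(\phi_1,\psi_1)(s)$ for every $s\in\R$, so
\begin{equation*}
F_1(\phi_2,\psi_1)(t) = \int_{-\infty}^{\infty} G_1(t-s,r)\,H_1(\phi_2,\psi_1)(s)\,ds \le \int_{-\infty}^{\infty} G_1(t-s,r)\,H_1(\phi_1,\psi_1)(s)\,ds = F_1(\phi_1,\psi_1)(t),
\end{equation*}
which is (ii); (iii) and (iv) are verbatim arguments using Lemma \ref{lemH}(ii) and (v). Convergence of all integrals is immediate from the boundedness of $\phi,\psi$ (hence of $H_i$) together with the decay bound $|G_i(t,r)|\le M_i e^{-\delta_i|t|}$.

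For statement (i) I would use a translation trick. Assuming, as in the parallel statement Lemma \ref{lemH}(iv), that $\phi$ and $\psi$ are non-decreasing, fix $t\in\R$ and $s>0$ and substitute $u\mapsto u+s$ in the integral for $F_2(\phi,\psi)(t+s)$ to obtain
\begin{equation*}
F_2(\phi,\psi)(t+s) - F_2(\phi,\psi)(t) = \int_{-\infty}^{\infty} G_2(t-u,r)\bigl[H_2(\phi,\psi)(u+s) - H_2(\phi,\psi)(u)\bigr]\,du.
\end{equation*}
The bracketed quantity is nonnegative by Lemma \ref{lemH}(iv), and $G_2\ge 0$ by the first step, so the integral is nonnegative and $F_2(\phi,\psi)$ is non-decreasing.

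The step I expect to be the main obstacle is the first one: securing nonnegativity (not merely exponential decay) of the Green's kernels $G_i(\cdot,r)$ uniformly for small $r>0$. When $r=0$ the kernel of $D\phi''-c\phi'-\beta\phi$ is the standard two-sided exponential, manifestly positive, and a perturbation argument based on the characteristic roots in \cite{BarkNguy,mal} should propagate this sign to small $r>0$; but it is the one non-mechanical ingredient. Once positivity of $G_i$ is in hand, the rest of the lemma is essentially bookkeeping on top of Lemma \ref{lemH}.
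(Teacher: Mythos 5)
Your proposal is correct and takes essentially the same route as the paper, which omits the argument entirely and merely asserts that the properties are ``inherited'' from Lemma~\ref{lemH}: your filling-in---nonnegativity of the kernels $G_1,G_2$ for small $r$ (the positivity result cited from \cite{BarkNguy}), termwise integration of the pointwise inequalities of Lemma~\ref{lemH}, and the translation substitution for the monotonicity statement---is exactly what that assertion presupposes. You also correctly identify the two points the paper leaves implicit, namely that the sign (not just the exponential decay) of $G_i$ is what is really needed, and that part (i) requires the same unstated hypothesis that $\phi$ and $\psi$ be non-decreasing which already underlies the paper's proof of Lemma~\ref{lemH}(iv).
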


\medskip
\noindent
The proof follows from the inherited properties of $\left(H_1\left(\phi,\psi\right),H_2\left(\phi,\psi\right)\right)$ from Lemma (\ref{lemH}). 
\subsubsection{Existence Via Schauder's Fixed Point Theorem}
We define the following set 
\begin{align*}
\Gamma_2\left(\left(\underline{\phi}, \underline{\psi}\right),\left(\overline{\phi}, \overline{\psi} \right)\right)=\bigl\{\left(\phi,\psi\right)\in C\left(\R,\R^2\right):{}& i) \ \psi(t) \ \text{is nondecreasing in} \ \R. \ \text{and} \\
&ii)\  \underline{\phi}(t)\le \phi(t)\le \overline{\phi}, \ \underline{\psi}(t)\le \psi(t)\le \overline{\psi}(t) 
\bigr\}.   
\end{align*}
Furthermore, we define the exponential decay norm for some $\mu>0$ as
\[|\Phi|_{\mu}=\sup_{t\in \R} \ e^{-\mu|t|}||\Phi(t)||. \]

\medskip
\noindent
Now, define the ball as
\[B_{\mu}\left(\R,\R^2\right)=\left\{ \Phi\in C\left(\R,\R^2\right): |\Phi|_{\mu}<\infty\right\}.\]
\begin{lemma}\label{LVlem1} Define $\Gamma_2\left(\left(\underline{\phi}, \underline{\psi}\right),\left(\overline{\phi}, \overline{\psi} \right)\right), |\Phi|_{\mu},$ and $B_{\mu}(\R,\R^2)$ as above, then 
\begin{enumerate}
\item $ \Gamma_2$ is nonempty.
\item $\Gamma_2$ is closed, bounded and convex.
\item  $\Gamma_2\subset B_{\mu}(\R,\R^2).$
\item $\left(B_{\mu}(\R,\R^2),|\cdot|_{\mu}\right)$ is a Banach space.
\end{enumerate}   
\end{lemma}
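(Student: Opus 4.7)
My plan is to treat the four claims in order, relying on standard arguments adapted to the exponential-decay setting.

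For (i), the natural candidate is one of the extremes of the order interval: the upper solution $(\overline{\phi},\overline{\psi})$ (or the lower solution $(\underline{\phi},\underline{\psi})$). Since $\overline{\psi}$ is nondecreasing by the standing hypothesis on upper/lower solution pairs, and $\overline{\phi}\le\overline{\phi}$, $\underline{\psi}\le\overline{\psi}\le\overline{\psi}$ trivially, the pair $(\overline{\phi},\overline{\psi})$ lies in $\Gamma_2$. For (ii), boundedness is immediate because every member satisfies $0\le\phi\le\overline{\phi}\le k_1$ and $0\le\psi\le\overline{\psi}\le k_2$, so $|\Phi|_\mu\le\max(k_1,k_2)$. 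Convexity follows because, for $\lambda\in[0,1]$, the pointwise inequalities $\underline{\phi}\le\phi_i\le\overline{\phi}$ and $\underline{\psi}\le\psi_i\le\overline{\psi}$ are preserved under convex combinations, and a convex combination of two nondecreasing functions is nondecreasing. Closedness (in the norm $|\cdot|_\mu$ inherited from $B_\mu$) follows because $|\Phi_n-\Phi|_\mu\to 0$ forces pointwise convergence on every compact set, and both the order inequalities and monotonicity of $\psi$ are preserved under pointwise limits.

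For (iii), any $\Phi=(\phi,\psi)\in\Gamma_2$ is uniformly bounded by $M:=\max(k_1,k_2)$, so $e^{-\mu|t|}\|\Phi(t)\|\le M$ for every $t\in\R$, hence $|\Phi|_\mu\le M<\infty$.

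For (iv), I first verify that $|\cdot|_\mu$ is a norm on $B_\mu$: the weight $e^{-\mu|t|}$ is strictly positive, so $|\Phi|_\mu=0$ forces $\Phi\equiv 0$; homogeneity and the triangle inequality transfer from the Euclidean norm on $\R^2$. For completeness, given a Cauchy sequence $\{\Phi_n\}\subset B_\mu$ and any compact interval $[-N,N]$, the weight $e^{-\mu|t|}$ is bounded below by $e^{-\mu N}>0$, so the restriction of $\{\Phi_n\}$ to $[-N,N]$ is Cauchy in the sup norm; letting $N$ exhaust $\R$ via a diagonal argument yields a continuous pointwise limit $\Phi\colon\R\to\R^2$. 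A standard $\varepsilon/3$ argument shows $|\Phi_n-\Phi|_\mu\to 0$, and the bound $|\Phi|_\mu\le|\Phi-\Phi_{n_0}|_\mu+|\Phi_{n_0}|_\mu$ shows $\Phi\in B_\mu$.

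I expect the main technical point to be the completeness in (iv), since it is the only step requiring more than direct verification; the key observation there is the uniform lower bound on $e^{-\mu|t|}$ on compacts, which reduces the Cauchy property in $|\cdot|_\mu$ to uniform Cauchyness on each $[-N,N]$, allowing the usual completeness proof for $C([-N,N],\R^2)$ to be promoted to a global limit.
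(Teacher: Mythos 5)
Your proof is correct, and it is worth noting that the paper itself gives no argument for this lemma at all (it is dismissed with ``the result is classical, thus the proof is omitted''), so you are supplying the missing details rather than diverging from an existing proof. Your handling of (i)--(iii) is the standard argument. Two small points: nonemptiness via the witness $(\overline{\phi},\overline{\psi})$ tacitly uses that $\overline{\psi}$ is nondecreasing and that $\underline{\phi}\le\overline{\phi}$, $\underline{\psi}\le\overline{\psi}$; in the paper this is only guaranteed because the main theorem explicitly assumes the upper and lower solutions themselves lie in $\Gamma_2$, so you should state that hypothesis rather than attribute it to the definition of upper/lower solutions, which does not require monotonicity. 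Also, with the Euclidean norm the uniform bound in (ii)--(iii) is $\sqrt{k_1^2+k_2^2}$ rather than $\max(k_1,k_2)$, a harmless constant. For (iv), your scheme works, but it can be streamlined: the map $\Phi\mapsto e^{-\mu|\cdot|}\Phi$ is an isometric isomorphism of $\left(B_{\mu}(\R,\R^2),|\cdot|_{\mu}\right)$ onto $\left(BC(\R,\R^2),\|\cdot\|_{\infty}\right)$, so completeness is inherited at once; alternatively, after extracting the locally uniform limit $\Phi$, fix $n$ large, let $m\to\infty$ pointwise in $e^{-\mu|t|}\|\Phi_n(t)-\Phi_m(t)\|<\varepsilon$, and conclude $|\Phi_n-\Phi|_{\mu}\le\varepsilon$ directly, which avoids the $\varepsilon/3$ bookkeeping and the diagonal argument.
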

\noindent The result is classical, thus the proof is omitted. It is now possible to complete the requirements of Schauder's theorem. 
\begin{lemma} \label{LVlem2} Assume (C1), (C2) and (PQM) hold, then 
\begin{enumerate}
\item $F:\Gamma_2\left(\left(\underline{\phi}, \underline{\psi}\right),\left(\overline{\phi}, \overline{\psi} \right)\right)\to \Gamma_2 \left(\left(\underline{\phi}, \underline{\psi}\right),\left(\overline{\phi}, \overline{\psi} \right)\right). $
\item $F=(F_1,F_2)$ is continuous with respect to  $|\cdot|_{\mu}$ in $B_{\mu}(\R,\R^2).$
 \end{enumerate}
\end{lemma}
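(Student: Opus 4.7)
The plan is to prove the two assertions separately. For part (1), I would verify that the three defining features of $\Gamma_2$---membership in $C(\R,\R^2)$, monotonicity of the second coordinate, and the squeeze between the prescribed upper and lower envelopes---are each preserved by $F$. For part (2), my approach is the standard Lipschitz-plus-exponential-weight calculation exploiting the bounds $|G_i(t,r)|\le M_i e^{-\delta_i|t|}$.

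For the monotonicity of $F_2(\phi,\psi)$, I would use the translation identity
\begin{equation*}
F_2(\phi,\psi)(t+h) - F_2(\phi,\psi)(t) = \int_{-\infty}^{\infty} G_2(t-u,r)\bigl[H_2(\phi,\psi)(u+h) - H_2(\phi,\psi)(u)\bigr]\,du,
\end{equation*}
which is non-negative for $h>0$ by Lemma~\ref{lemH}(iv) combined with the sign properties of $G_2$ recalled from \cite{BarkNguy,mal}. For the box inequalities, the monotonicity properties in Lemma~\ref{lemF} imply that for every $(\phi,\psi)\in \Gamma_2$,
\begin{equation*}
F_1(\underline{\phi},\overline{\psi}) \le F_1(\phi,\psi) \le F_1(\overline{\phi},\underline{\psi}), \qquad F_2(\underline{\phi},\underline{\psi}) \le F_2(\phi,\psi) \le F_2(\overline{\phi},\overline{\psi}),
\end{equation*}
so that $F(\Gamma_2)\subset \Gamma_2$ reduces to the four integral inequalities $\underline{\phi}\le F_1(\underline{\phi},\overline{\psi})$, $F_1(\overline{\phi},\underline{\psi})\le \overline{\phi}$, $\underline{\psi}\le F_2(\underline{\phi},\underline{\psi})$, and $F_2(\overline{\phi},\overline{\psi})\le \overline{\psi}$. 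These are exactly the integral form of the upper and lower solution inequalities (to be stated in the companion definition) translated via the Perron--Lyapunov representation in Eq.~(\ref{fixed point1}).

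For part (2), I would first use (C2) together with the definition of $H_i$ to estimate, for $\Phi_j=(\phi_j,\psi_j)\in B_\mu$,
\begin{equation*}
|H_i(\Phi_1)(s) - H_i(\Phi_2)(s)| \le (L_i+\beta_i)\sup_{|\theta|\le\tau}\|\Phi_1(s+\theta)-\Phi_2(s+\theta)\| \le (L_i+\beta_i)\,e^{\mu(|s|+\tau)}\,|\Phi_1-\Phi_2|_\mu.
\end{equation*}
Multiplying by $|G_i(t-s,r)|\le M_i e^{-\delta_i|t-s|}$, integrating in $s$, and applying $|s|\le|t-s|+|t|$, I would obtain
\begin{equation*}
e^{-\mu|t|}\,|F_i(\Phi_1)(t)-F_i(\Phi_2)(t)| \le \frac{2M_i(L_i+\beta_i)\,e^{\mu\tau}}{\delta_i-\mu}\,|\Phi_1-\Phi_2|_\mu,
\end{equation*}
valid whenever $0<\mu<\min(\delta_1,\delta_2)$. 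Taking the supremum in $t$ yields a global Lipschitz estimate, and hence continuity of $F$ in the $|\cdot|_\mu$ norm.

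The principal obstacle is the box-preservation step in part (1): it depends on the precise definition of upper and lower solutions and on showing that their differential inequalities convert, through the Green's kernel representation and the sign data inherited from \cite{BarkNguy,mal}, into the four integral inequalities listed above. The remaining verifications---continuity of the functions $F_i(\phi,\psi)$ on $\R$ and part (2) itself---are essentially routine once the exponential decay of $G_i$ and the smallness condition $\mu<\min(\delta_1,\delta_2)$ are in force.
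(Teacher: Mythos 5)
Your proposal is correct and follows the same overall strategy as the paper: reduce the invariance $F(\Gamma_2)\subset\Gamma_2$ to endpoint inequalities via the monotonicity of $F_1,F_2$ in Lemma \ref{lemF}, verify those endpoints through the Green's-kernel (Perron--Lyapunov) representation together with the upper/lower-solution differential inequalities, and prove continuity by a Lipschitz estimate on $H_i$ from (C2) followed by convolution against the bound $|G_i(t,r)|\le M_ie^{-\delta_i|t|}$ with $0<\mu<\min\{\delta_1,\delta_2\}$. The one substantive difference is in the second component: you sandwich with the jointly monotone chain $F_2(\underline{\phi},\underline{\psi})\le F_2(\phi,\psi)\le F_2(\overline{\phi},\overline{\psi})$, which is literally what Lemma \ref{lemF}(iv) provides and which matches the paper's definition of upper/lower solutions (whose second inequalities are imposed on the pairs $(\overline{\phi},\overline{\psi})$ and $(\underline{\phi},\underline{\psi})$), whereas the paper's proof instead uses the cross chain $F_2(\overline{\phi},\underline{\psi})\le F_2(\phi,\psi)\le F_2(\underline{\phi},\overline{\psi})$, which rests on the mixed monotonicity coming from (P3)--(P4) and requires endpoint inequalities for the mixed pairs; your bookkeeping is at least as consistent with the stated definitions, while the paper's is the one tailored to the cross-iteration scheme. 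Your continuity argument is also slightly streamlined (a single weighted-Lipschitz bound using $|s|\le|t-s|+|t|$ instead of the paper's $\varepsilon$--$\delta$ case analysis for $t>0$ and $t\le 0$), and you correctly require $\mu<\min\{\delta_1,\delta_2\}$; both routes need, and should state, the positivity of the kernels $G_i$ inherited from \cite{BarkNguy,mal} for the comparison step, which the paper also uses only implicitly.
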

\begin{proof}
For part $i.)$ we only need to show that 
\begin{align*}
\begin{cases}
\underline{\phi}\le F_1(\underline{\phi},\overline{\psi})\le F_1(\overline{\phi},\underline{\psi})   \le\overline{\phi} \\
\underline{\psi}\le  F_2(\overline{\phi},\underline{\psi}) \le F_2(\underline{\phi},\overline{\psi})    \le \overline{\psi}.
\end{cases}  
\end{align*}
This is due to the fact 
\begin{align*}
\begin{cases}
 F_1(\underline{\phi},\overline{\psi})\le F_1(\phi,\psi)\le  F_1(\overline{\phi},\underline{\psi})   \\
  F_2(\overline{\phi},\underline{\psi}) \le F_2(\phi,\psi)\le F_2(\underline{\phi},\overline{\psi}).
\end{cases}  
\end{align*}
We only show the first inequality, because the second follows the same way. We first show that $ \underline{\phi}\le F_1(\underline{\phi},\overline{\psi})$. 
\begin{align*}
&F_1(\underline{\phi},\overline{\psi})(t)=\int_{-\infty}^{\infty} G_1(t-s,r)H_1 (\underline{\phi},\overline{\psi})(s) ds\\
&\ge \int_{-\infty}^{\infty} G_1(t-s,r)\left(-D_1\underline{\phi}''(t)+c\underline{\phi}'(t+r_1)+\beta_1\underline{\phi}(t+r_1)\right) ds=\underline{\phi}(t)
\end{align*}
for all $t\in \R.$
Next, we show $F_1(\overline{\phi},\underline{\psi})   \le\overline{\phi}$.
\begin{align*}
&F_1(\overline{\phi},\underline{\psi}) (t)=\int_{-\infty}^{\infty} G_1(t-s,r)H_1 (\overline{\phi},\underline{\psi})(s) ds\\
&\le \int_{-\infty}^{\infty} G_1(t-s,r)\left(-D_1\overline{\phi}''(t)+c\overline{\phi}'(t+r_1)+\beta_1\overline{\phi}(t+r_1)\right) ds=\overline{\phi}(t)
\end{align*}
for all $t\in \R.$ The proof is complete. The method of proof for part $ii.)$ is similar to  \cite{lilin}, Lemma 3.4 with several modifications. To this end, we only need to show 
\[F_1:B_{\mu}(\R,\R^2)\to B_{\mu}(\R,\R^2) \] is continuous with respect to $|\cdot|_{\mu},$ because the proof for $F_2(\phi,\psi)$ is very similar. Take $\mu<\max\{\delta_1,\delta_2\}$ and let $\Phi=(\phi_1,\psi_1), \Psi=(\phi_2,\psi_2) \in B_{\mu}(\R,\R^2). $ It is clear that \[F_1=(F_1(\Phi), F_1(\Psi)) \in B_{\mu}(\R,\R^2). \] 
We will now turn our attention to the continuity of $F$. Fix $\varepsilon>0,$ and take \[\delta<\min\left\{\frac{\varepsilon(\delta_1-\mu)}{2 M_1},\frac{\varepsilon}{e^{\mu c \tau}L_1+\beta_1}\right\},\]
where $L_1$ is the Lipchitz constant from (C2), $\beta_1$ is from (PQM), and $\delta_1, M_1$ are from the bounds of the Green function. We will first prove that 
$H_1:B_{\mu}(\R,\R^2)\to B_{\mu}(\R,\R^2) $ is continuous with respect to $|\cdot|_{\mu}$. We take $\left|\Phi(t)-\Psi(t)\right|_{\mu}<\delta,$ then 
\begin{align*}
&A:=\left|H_1(\Phi)(t)-H_1(\Psi)(t)\right|_{\mu}\\
&=\left|f_{1c}(\phi_{1t},\psi_{1t})+\beta_1\phi_1(t+r_1)-\left(f_{2c}(\phi_{2t},\psi_{2t})+\beta_1\phi_2(t+r_1)\right) \right|_{\mu}\\
&\le \left|f_{1c}(\phi_{1t},\psi_{1t})-f_{2c}(\phi_{2t},\psi_{2t})\right|_{\mu}+\beta_1\left|\phi_1(t+r_1)-\phi_2(t+r_1) \right|_{\mu}\\
&\le L_1 ||\Phi(t)-\Psi(t)||_{\X_{c\tau}}e^{-\mu|t|} +\beta_1 \sup_{t\in \R}\left|\phi_1(t)-\phi_2(t) \right|_{\mu}\\
&\le L_1 \sup_{\theta\in (-c\tau,0)} |\Phi(t+\theta)-\Psi(t+\theta)|e^{-\mu|t+\theta|}+\beta_1 |\Phi(t)-\Psi(t)|_{\mu}\\
&\le e^{\mu c\tau} L_1  |\Phi(t)-\Psi(t)|_{\mu}+\beta_1 \left|\Phi(t)-\Psi(t) \right|_{\mu}<\left(e^{\mu c\tau} L_1+\beta_1\right)\delta\\
&<\left(e^{\mu c\tau} L_1+\beta_1\right)\left(\frac{\varepsilon}{e^{\mu c \tau}L_1+\beta_1}\right)<\varepsilon.
\end{align*}
Thus, \[H_1:B_{\mu}(\R,\R^2)\to B_{\mu}(\R,\R^2)\] is continuous. The proof for $H_2(\phi,\psi)(t)$ is similar. Therefore, \[H=(H_1,H_2):B_{\mu}(\R,\R^2)\to B_{\mu}(\R,\R^2)\] is continuous with respect to $|\cdot|_{\mu}.$ 
We can now prove that $F_1$ is continuous in the same manner. Indeed,
\begin{align*}
&B:=\left|F_1(\Phi)(t)-F_1(\Psi)(t)\right|=\left|F_1(\phi_{1},\psi_{1})-F_1(\phi_{2},\psi_{2})\right| \\
&=\left|\int_{-\infty}^{\infty} G_1(t-s,r)\left(H_1(\phi_{1},\psi_{1})(s)-H_1(\phi_{2},\psi_{2})(s)\right) ds \right| \\
&\le \int_{-\infty}^{t}\left| G_1(t-s,r)\left(H_1(\phi_{1},\psi_{1})(s)-H_1(\phi_{2},\psi_{2})(s)\right)\right| ds \\
&+\int_{t}^{\infty} \left|G_1(t-s,r)\left(H_1(\phi_{1},\psi_{1})(s)-H_1(\phi_{2},\psi_{2})(s)\right)\right| ds \\
&=\int_{-\infty}^{t}\left| G_1(t-s,r)\right|\left|\left(H_1(\phi_{1},\psi_{1})(s)-H_1(\phi_{2},\psi_{2})(s)\right)\right|ds\\
&+\int_{t}^{\infty}\left| G_1(t-s,r)\right|\left|\left(H_1(\phi_{1},\psi_{1})(s)-H_1(\phi_{2},\psi_{2})(s)\right)\right|ds.
\end{align*}

\noindent
We can now use the fact that $(H_1(\phi,\psi)(t)$ is continuous with respect to the exponential decay norm.
\begin{align*}
&B=\int_{-\infty}^{t}\left| G_1(t-s,r)\right|e^{\mu |s|}\left|\left(H_1(\phi_{1},\psi_{1})(s)-H_1(\phi_{2},\psi_{2})(s)\right)\right|e^{-\mu |s|}ds\\
&+\int_{t}^{\infty}\left| G_1(t-s,r)\right|e^{\mu |s|}\left|\left(H_1(\phi_{1},\psi_{1})(s)-H_1(\phi_{2},\psi_{2})(s)\right)\right|e^{-\mu |s|}ds \\
&\le\left(\int_{-\infty}^{t}\left| G_1(t-s,r)\right|e^{\mu |s|}ds+\int_{t}^{\infty}\left| G_1(t-s,r)\right|e^{\mu |s|}ds\right)\\
&\times \sup_{s\in \R}\left|\left(H_1(\phi_{1},\psi_{1})(s)-H_1(\phi_{2},\psi_{2})(s)\right)\right|_{\mu}. 
\end{align*}
\noindent
 Here, we will use the fact that $G_1(t,r)$ is bounded and decays as $|t|\to \infty$ to see the following:
\begin{align*}
&B\le \delta\left(\int_{-\infty}^{t}\left| G_1(t-s,r)\right|e^{\mu |s|}ds+\int_{t}^{\infty}\left| G_1(t-s,r)\right|e^{\mu |s|}ds\right)\\
&\le  \delta\left(\int_{-\infty}^{t}M_1e^{-\delta_1 |t-s|}e^{\mu|s|}ds+\int_{t}^{\infty}M_1e^{-\delta_1 |t-s|}e^{\mu|s|}ds\right)\\
&\le  M_1\delta\left(\int_{0}^{t}e^{-\delta_1 (t-s)+\mu s}ds+\int_{-\infty}^{0}e^{-\delta_1 (t-s)-\mu s}ds+\int_{t}^{\infty}e^{\delta_1 (t-s)+\mu s}ds\right)\\
&<2M_1\delta\left(\frac{\delta_1 e^{\mu t}+\mu e^{-\delta_1 t}}{\delta_1^2-\mu^2}\right).
\end{align*}
\noindent
In the exponential decay norm for $t>0$ we see 
\begin{align*}
&\left|F_1(\Phi)(t)-F_1(\Psi)(t)\right|_{\mu}=\left|F_1(\Phi)(t)-F_1(\Psi)(t)\right|e^{-\mu t}\\
&\le 2M_1\delta \left(\frac{\delta_1 e^{\mu t}+\mu e^{-\delta_1 t}}{\delta_1^2-\mu^2}\right)e^{-\mu t}=2M_1\delta \left(\frac{\delta_1 +\mu e^{-(\delta_1+\mu)t}}{\delta_1^2-\mu^2}\right)\\
&\le 2M_1\delta \left(\frac{\delta_1 +\mu}{\delta_1^2-\mu^2}\right)= 2M_1\delta \left(\frac{1}{\delta_1-\mu}\right)<\varepsilon
\end{align*}
Moreover, when $t\le 0$ we have the following estimate 
\begin{align*}
&\left|F_1(\Phi)(t)-F_1(\Psi)(t)\right|=\left|F_1(\phi_{1},\psi_{1})-F_1(\phi_{2},\psi_{2})\right| \\ 
&\le\left(\int_{-\infty}^{t}\left| G_1(t-s,r)\right|e^{\mu |s|}ds+\int_{t}^{\infty}\left| G_1(t-s,r)\right|e^{\mu |s|}ds\right)\\
&\times \sup_{s\in \R}\left|\left(H_1(\phi_{1},\psi_{1})(s)-H_1(\phi_{2},\psi_{2})(s)\right)\right|_{\mu} \\
&\le \delta\left(\int_{-\infty}^{t}\left| G_1(t-s,r)\right|e^{\mu |s|}ds+\int_{t}^{\infty}\left| G_1(t-s,r)\right|e^{\mu |s|}ds\right)\\
&\le  M_1\delta\left(\int_{t}^{0}e^{\delta_1 (t-s)-\mu s}ds+\int_{-\infty}^{t}e^{-\delta_1 (t-s)-\mu s}ds+\int_{0}^{\infty}e^{\delta_1 (t-s)+\mu s}ds\right)\\
&<2M_1\delta\left(\frac{\delta e^{-\mu t}+\mu e^{\delta_1 t}}{\delta_1^2-\mu^2}\right).
\end{align*}

\medskip
\noindent
Similarly as above in the exponential norm for $t\le0$ we see 
\begin{align*}
&\left|F_1(\Phi)(t)-F_1(\Psi)(t)\right|_{\mu}=\left|F_1(\Phi)(t)-F_1(\Psi)(t)\right|e^{\mu t}\\
&\le 2M_1\delta \left(\frac{\delta_1 e^{-\mu t}+\mu e^{\delta_1 t}}{\delta_1^2-\mu^2}\right)e^{\mu t}=2M_1\delta \left(\frac{\delta_1 +\mu e^{(\delta_1+\mu)t}}{\delta_1^2-\mu^2}\right)\\
&\le 2M_1\delta \left(\frac{\delta_1 +\mu}{\delta_1^2-\mu^2}\right)= 2M_1\delta \left(\frac{1}{\delta_1-\mu}\right)<\varepsilon.
\end{align*}
Therefore, 
\[F=(F_1,F_2):B_{\mu}(\R,\R^2)\to B_{\mu}(\R,\R^2) \] is continuous with respect to $|\cdot|_{\mu}.$
\end{proof}
\begin{corollary}\label{collip}
Assume $(C2)$ holds, then there is exists a constant $C_1>0$ such that 
\[|F(\Phi)-F(\Psi)|_{\mu}\le C_1|\Phi-\Psi|_{\mu} \ \text{for all} \ \phi, \psi \in \Gamma_2,  \  t\in \R.\]
\end{corollary}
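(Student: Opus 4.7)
The plan is to simply re-read the continuity argument of Lemma \ref{LVlem2} quantitatively, recording explicit constants instead of using the tailor-made choice of $\delta$ that was convenient for proving continuity. Nothing new has to be proved; one only needs to assemble two Lipschitz estimates that the preceding proof already contains in all but name.

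First, I would establish a Lipschitz bound for the nonlinear operator $H=(H_1,H_2)$ with respect to the exponential decay norm. Reading off the chain of inequalities in the ``$A:=$'' computation in Lemma \ref{LVlem2}, for $i=1,2$ and any $\Phi,\Psi\in B_{\mu}(\R,\R^2)$ one has
\begin{equation*}
|H_i(\Phi)-H_i(\Psi)|_{\mu}\le \bigl(e^{\mu c\tau}L_i+\beta_i\bigr)\,|\Phi-\Psi|_{\mu},
\end{equation*}
where $L_i$ is the Lipschitz constant coming from (C2) and $\beta_i$ is the shift constant from (PQM). This is the only place where hypothesis (C2) is used.

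Second, I would convert the Lipschitz estimate for $H_i$ into one for $F_i=-\mathcal{L}_i^{-1}H_i$ using the explicit Green-function bound $|G_i(t,r)|\le M_i e^{-\delta_i |t|}$. Splitting the convolution $F_i(\Phi)(t)-F_i(\Psi)(t)=\int_{-\infty}^{\infty}G_i(t-s,r)\bigl[H_i(\Phi)(s)-H_i(\Psi)(s)\bigr]\,ds$ into $s<t$ and $s>t$, factoring out the $|H_i(\Phi)-H_i(\Psi)|_{\mu}$ term, and then multiplying by $e^{-\mu|t|}$ and computing the resulting elementary exponential integrals (separately for $t>0$ and $t\le 0$, exactly as displayed in Lemma \ref{LVlem2}) yields the uniform bound
\begin{equation*}
|F_i(\Phi)-F_i(\Psi)|_{\mu}\le \frac{2M_i}{\delta_i-\mu}\,|H_i(\Phi)-H_i(\Psi)|_{\mu},
\end{equation*}
provided $\mu<\min\{\delta_1,\delta_2\}$ (which is the standing restriction already imposed in the previous proof).

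Combining these two estimates for $i=1,2$ and setting
\begin{equation*}
C_1:=\max_{i=1,2}\frac{2M_i\bigl(e^{\mu c\tau}L_i+\beta_i\bigr)}{\delta_i-\mu},
\end{equation*}
I obtain $|F(\Phi)-F(\Psi)|_{\mu}\le C_1|\Phi-\Psi|_{\mu}$ for all $\Phi,\Psi\in\Gamma_2$. There is really no obstacle here: the only subtle point is the case split $t>0$ versus $t\le 0$ in evaluating the exponential integrals, but that case split has already been carried out explicitly in Lemma \ref{LVlem2}, so I would simply cite it rather than redo the calculation.
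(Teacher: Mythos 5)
Your proposal is correct and matches the paper's intent: the corollary carries no separate proof in the paper precisely because it is read off from the quantitative estimates inside the proof of Lemma \ref{LVlem2}, namely the bound $\bigl(e^{\mu c\tau}L_i+\beta_i\bigr)$ for $H_i$ and the Green-function convolution bound $\tfrac{2M_i}{\delta_i-\mu}$, which you combine into the explicit constant $C_1$. The only refinement worth noting is that you correctly state the requirement as $\mu<\min\{\delta_1,\delta_2\}$, whereas the paper writes $\mu<\max\{\delta_1,\delta_2\}$, which appears to be a typographical slip.
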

\begin{lemma}\label{LVlem3} Assume $(C1)$ and $(C2)$ hold then
$F(\Gamma_2)\to \Gamma_2 $ is compact.   
\end{lemma}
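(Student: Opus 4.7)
The plan is to apply an Arzel\`a--Ascoli type compactness criterion adapted to the decay--norm Banach space $(B_{\mu}(\R,\R^2),|\cdot|_{\mu})$. The version I would use says that a subset $\mathcal{K}\subset B_{\mu}(\R,\R^2)$ is precompact provided (a) $\mathcal{K}$ is uniformly bounded with respect to $|\cdot|_{\mu}$, (b) $\mathcal{K}$ is equicontinuous on every compact interval $[-N,N]$, and (c) the tails decay uniformly, i.e.\ $\sup_{\Phi\in\mathcal{K}}\sup_{|t|\ge T}e^{-\mu|t|}\|\Phi(t)\|\to 0$ as $T\to\infty$. I will verify these three properties for $\mathcal{K}=F(\Gamma_2)$, and then combine this with the fact that $\Gamma_2$ is closed (Lemma \ref{LVlem1}(ii)--(iii)) to conclude that $\overline{F(\Gamma_2)}\subset\Gamma_2$ is compact.

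Properties (a) and (c) are essentially free. By Lemma \ref{LVlem2}(i), $F(\Gamma_2)\subset\Gamma_2$, so $0\le F_i(\phi,\psi)(t)\le k_i$ pointwise for $i=1,2$. This gives the uniform bound $|F(\phi,\psi)|_{\mu}\le\sqrt{k_1^2+k_2^2}$, verifying (a). The same pointwise bound yields $e^{-\mu|t|}\|F(\phi,\psi)(t)\|\le e^{-\mu|t|}\sqrt{k_1^2+k_2^2}\to 0$ uniformly in $(\phi,\psi)\in\Gamma_2$ as $|t|\to\infty$, which is exactly (c).

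The substantive step is (b). I would establish that the first derivatives $F_1'(\phi,\psi)$ and $F_2'(\phi,\psi)$ are uniformly bounded on $\R$ as $(\phi,\psi)$ ranges over $\Gamma_2$, so that $F(\Gamma_2)$ is uniformly Lipschitz and therefore equicontinuous on every compact interval. Two routes are available. The first is to differentiate the Perron--Lyapunov integral representation of $F_i$ under the integral sign and invoke the exponential bound $|\partial_t G_i(t,r)|\le \widetilde M_i e^{-\widetilde\delta_i|t|}$ provided by the Green function construction of \cite{mal}, combined with the uniform sup-norm bound $\|H_i(\phi,\psi)\|_\infty\le L_i\max\{k_1,k_2\}+\beta_i\max\{k_1,k_2\}$ coming from (C2), the fact that $f_i(\hat 0)=0$ by (C1), and $(\phi,\psi)\in\Gamma_2$. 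The second route is to use the ODE identity of Lemma \ref{lemF}-adjacent Lemma, namely $D_iF_i''-cF_i'(\cdot+r_\ell)-\beta_iF_i(\cdot+r_\ell)+H_i(\phi,\psi)=0$; since $H_i$ and $F_i$ are uniformly bounded, a standard second-order elliptic (interpolation) estimate yields a uniform sup-norm bound on $F_i'$. Either approach produces the uniform Lipschitz constant needed for (b).

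With (a)--(c) in hand, the diagonal extraction argument underlying Arzel\`a--Ascoli produces, from any sequence in $F(\Gamma_2)$, a subsequence converging uniformly on compacts, and the uniform tail estimate (c) promotes this convergence to convergence in $|\cdot|_{\mu}$. Hence $F(\Gamma_2)$ is precompact in $B_{\mu}(\R,\R^2)$, and since $\Gamma_2$ is closed in $B_{\mu}(\R,\R^2)$ by Lemma \ref{LVlem1}, its closure lies inside $\Gamma_2$, proving compactness. The main (and essentially only) obstacle is securing the uniform derivative bound underlying (b); once the Green-function derivative estimates of \cite{mal} are invoked, the remainder of the argument is routine.
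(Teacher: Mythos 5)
Your proof is correct, but it takes a genuinely different route from the paper's. The paper argues by truncation: it introduces approximating operators $F^n$ that agree with $F$ on $[-n,n]$ and are frozen at the values $F(\Phi,\Psi)(\pm n)$ outside, claims each $F^n$ is compact by Arzel\`a--Ascoli on the finite interval, and then uses the exponential weight to estimate $\sup_{t\in\R}|F^n(\Phi,\Psi)(t)-F(\Phi,\Psi)(t)|_{\mu}\le 2C_1e^{-\mu n}\to 0$, so that $F$ is compact as a uniform limit of compact operators (the scheme of \cite{ma} and \cite{lilin}). You instead verify a weighted Arzel\`a--Ascoli criterion for $F(\Gamma_2)$ directly: uniform boundedness and the tail condition are free because $F(\Gamma_2)\subset\Gamma_2$, and equicontinuity comes from a uniform bound on $F_i'$ obtained from the Perron--Lyapunov representation. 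Your route is more economical (no truncation family, no limit-of-compact-operators step), and it supplies explicitly the one ingredient the paper glosses over: the paper's appeal to Corollary \ref{collip} gives Lipschitz continuity of $F$ as an operator in $|\cdot|_{\mu}$, which by itself does not yield equicontinuity in $t$ of the image functions, whereas your derivative estimate does exactly that, so in effect you repair as well as reprove the step. The only point to flag is that the exponential bound on $\partial_t G_i(t,r)$ is not stated in the paper (only $|G_i(t,r)|\le M_ie^{-\delta_i|t|}$ is), so you should either quote that bound from the construction in \cite{mal} and \cite{BarkNguy} or run your second route with some care, since the Landau-type interpolation there requires the a priori finiteness of $\sup_{t\in\R}|F_i'(\phi,\psi)(t)|$, which again is obtained most cleanly from the integral representation.
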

\begin{proof}
Corollary (\ref{collip}) in conjunction with Lemmas (\ref{LVlem1}) and (\ref{LVlem2})  shows that $F^n(\Gamma_2)$ is equicontinuous and uniformly bounded on any finite interval in $\R$. Thus, take $n\in \N$, then in the interval $[-n,n]$ we can say that $F^n$ is compact by Arzela-Ascoli. Now,  define 
\[F(\Phi,\Psi)(t)=\begin{cases} F(\Phi,\Psi)(t), & \ \text{when}\ t\in [-n,n]\\
F(\Phi,\Psi)(n), & \ \text{when}\ t\in (n,\infty)\\
F(\Phi,\Psi)(-n), & \ \text{when}\ t\in (-\infty,-n).
\end{cases}\]
\\ Fix $t\in \R,$ then for all  $(\phi(t),\psi(t))\in \Gamma_2. $
\begin{align*}
&\sup_{t\in \R} |F^n(\Phi,\Psi)(t)-F(\Phi,\Psi)(t)|_{\mu}\\
&=\sup_{t\in (-\infty,-n)\bigcup (n,\infty)}|F^n(\Phi,\Psi)(t)-F(\Phi,\Psi)(t)|_{\mu}\\
&\le 2C_1 e^{-\mu n}
\end{align*}
Thus, $\lim_{n\to \infty} 2C_1 e^{-\mu n}=0,$ 
 so $F^n\to F$ in $\Gamma_2$ as $n\to \infty.$ 
We can apply Arzela-Ascoli to $F$ as well. The proof is complete.
\end{proof}

\medskip
\noindent
We can now state and prove our main result. 
\begin{definition} A  pair of functions  \
 $\overline{\Phi}=\left(\overline{\phi}, \overline{\psi}  \right), \underline{\Phi}=\left(\underline{\phi}, \underline{\psi}  \right)  \in C^2(\R^2,\R),$ where $ \phi, \phi',\phi'', \psi, \psi',\psi''$ are bounded on $\R$, is called an  upper solution (lower solution, respectively) for the wave equation (\ref{LV3}) if it satisfies the following
\begin{align*}
& D_1\phi''(t)-c\phi'(t+r_1)+f_{1c}(\overline{\phi_t}, \underline{\psi_t})\le 0 , \\
&  D_2\psi''(t)-c\psi'(t+r_3)+f_{2c}(\overline{\phi_t}, \overline{\psi_t})\le 0
\end{align*}
and
\begin{align*}
& D_1\phi''(t)-c\phi'(t+r_1)+f_{1c}(\underline{\phi_t}, \overline{\psi_t})\le 0 , \\
&  D_2\psi''(t)-c\psi'(t+r_3)+f_{2c}(\underline{\phi_t}, \underline{\psi_t})\le 0.
\end{align*}
\end{definition}
\begin{theorem}
Assume $(C1), (C2)$ and $(PQM)$ hold, if there is an upper $(\overline{\phi},\overline{\psi}) \in \Gamma_2$ and a lower solution $(\underline{\phi},\underline{\psi})\in \Gamma_2$ of Eq.(\ref{LV3}) such that for all $t\in \R$
\[0\le  \underline{\psi}(t)\le \overline{\psi}(t), \ 0\le  \underline{\phi}(t)\le \overline{\phi}(t) .\]
Then, there exists a monotone traveling wave solution to the system (\ref{LV3}).
\end{theorem}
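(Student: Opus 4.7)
The plan is to apply Schauder's fixed point theorem to the operator $F = (F_1, F_2)$ acting on the convex set $\Gamma_2\left(\left(\underline{\phi}, \underline{\psi}\right),\left(\overline{\phi}, \overline{\psi}\right)\right)$ inside the Banach space $(B_{\mu}(\R,\R^2),|\cdot|_{\mu})$. The preceding lemmas have already done essentially all of the heavy lifting, so the proof amounts to assembling them and then reading off the asymptotic and monotonicity properties from the resulting fixed point.

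First I would note that by Lemma \ref{LVlem1}, $\Gamma_2$ is a nonempty, closed, bounded, and convex subset of $(B_{\mu}(\R,\R^2),|\cdot|_{\mu})$. Lemma \ref{LVlem2} gives $F:\Gamma_2\to\Gamma_2$ and continuity of $F$ with respect to $|\cdot|_{\mu}$, while Lemma \ref{LVlem3} gives that $F(\Gamma_2)$ is relatively compact. Together these are exactly the hypotheses of Schauder's fixed point theorem, which yields a fixed point $(\phi,\psi)\in \Gamma_2$ with $(\phi,\psi)=F(\phi,\psi)$. By the equivalence recorded after the definition of $F$, this fixed point is a classical solution of the integral reformulation (\ref{fixed point1}), hence satisfies (\ref{LV4}) pointwise, which by construction of $H_1,H_2$ is identical to the wave system (\ref{LV3}).

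Next I would verify the asymptotic boundary conditions. Because $(\phi,\psi)\in \Gamma_2$ we have the sandwich bound
\begin{equation*}
\underline{\phi}(t)\le \phi(t)\le \overline{\phi}(t), \qquad \underline{\psi}(t)\le \psi(t)\le \overline{\psi}(t), \qquad t\in\R,
\end{equation*}
so the limits $\lim_{t\to-\infty}(\phi,\psi)=(0,0)$ and $\lim_{t\to\infty}(\phi,\psi)=(k_1,k_2)$ follow by the squeeze theorem once the upper and lower solutions are known to share these endpoints, which is a standing requirement on the admissible upper/lower pair. The required monotonicity of the $\psi$ component is inherited directly from the defining condition of $\Gamma_2$, so the fixed point is indeed a monotone traveling wave in the sense of the theorem.

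The step I expect to be the main obstacle is the verification that $F$ actually preserves $\Gamma_2$ together with the boundary conditions; this is what forces the whole machinery of (PQM), the auxiliary operators $H_1,H_2$, and the delicate choice of $\beta_1,\beta_2$. The cross-monotone structure in Lemma \ref{lemH} is exactly what ensures that the upper/lower sandwich is propagated by $F$, as encoded in Lemma \ref{LVlem2}(i); once that and the exponential-decay-norm continuity are in hand, the application of Schauder is a clean formality and the remaining asymptotic limits are forced by the pointwise bounds.
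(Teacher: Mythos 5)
Your proposal follows essentially the same route as the paper: invoke Lemmas \ref{LVlem1}, \ref{LVlem2}, and \ref{LVlem3} to apply Schauder's fixed point theorem on $\Gamma_2$ in $(B_{\mu}(\R,\R^2),|\cdot|_{\mu})$, identify the fixed point of $F$ with a solution of (\ref{LV3}), and obtain the asymptotic boundary conditions and monotonicity of $\psi$ from the sandwich bounds built into $\Gamma_2$. Your explicit remark that the upper and lower solutions must share the limiting values at $\pm\infty$ for the squeeze argument is a point the paper leaves implicit, but it does not change the argument.
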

\begin{proof}
Lemmas \ref{LVlem1}, \ref{LVlem2}, \ref{LVlem3} allows us to use Schuader's fixed point theorem to show that there exists a fixed point for system(\ref{LV3}). All we need to show is the asymptotic boundary conditions hold. If we apply the asymptotic limits and use the fact that any $(\phi,\psi)\in \Gamma_2$ we have \[(\underline{\phi}(t),\underline{\psi}(t))\le(\phi,\psi)\le(\overline{\phi},\overline{\psi}) \le (k_1,k_2)\] it is easy to see the asymptotic boundary holds. This completes the proof.
\end{proof}

\medskip
\noindent
It is extremely difficult if not impossible to directly find upper/lower solutions directly. It is much easier to relax the conditions and construct upper and/or lower solutions using what are known as quasi-upper/lower solutions, which are "rougher" in nature.
\begin{definition} 
A  pair of functions $\overline{\Phi}=\left(\overline{\phi}, \overline{\psi}  \right), \underline{\Phi}=\left(\underline{\phi}, \underline{\psi}  \right)  \in C^1(\R,\R^2),$ where $ \phi, \phi', \psi, \psi'$  are bounded on $\R$, $\phi'', \psi''$ are locally integrable and essentially bounded on $\R$, is called a quasi- upper solution (quasi-lower solution, respectively) for the wave equation (\ref{LV3}) if it satisfies the following for almost every $t\in \R$
\begin{align*}
& D_1\phi''(t)-c\phi'(t+r_1)+f_{1c}(\overline{\phi_t}, \underline{\psi_t})\le 0 , \\
&  D_2\psi''(t)-c\psi'(t+r_3)+f_{2c}(\overline{\phi_t}, \overline{\psi_t})\le 0
\end{align*}
and
\begin{align*}
& D_1\phi''(t)-c\phi'(t+r_1)+f_{1c}(\underline{\phi_t}, \overline{\psi_t})\le 0 , \\
&  D_2\psi''(t)-c\psi'(t+r_3)+f_{2c}(\underline{\phi_t}, \underline{\psi_t})\le 0.
\end{align*}
\end{definition}
\begin{proposition}
Let $(\phi,\psi)$ be a quasi- upper solution (quasi-lower solution, respectively) of Eq. (\ref{LV3}). Then, $F(\phi,\psi)$ is an upper solution (lower solution, respectively) of Eq. (\ref{LV3}).
\end{proposition}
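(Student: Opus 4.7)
The plan is to verify the two defining properties of an upper solution --- $C^2$ regularity with bounded derivatives and the cross-coupled differential inequalities --- for the iterate $F$ applied to the quasi-upper solution, and symmetrically for the quasi-lower. The convolution with the Green functions $G_1, G_2$ from \cite{BarkNguy, mal} is the bridge between almost-everywhere regularity/inequalities and the pointwise statements needed for a genuine upper or lower solution: these Green functions are smooth, exponentially decaying, and non-negative for sufficiently small $r$.

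For the regularity, I would differentiate the convolutions $F_i(\phi,\psi)(t) = \int G_i(t-s, r) H_i(\phi,\psi)(s)\, ds$ under the integral sign. Because $(\phi,\psi) \in \Gamma_1$ the integrands $H_1, H_2$ are continuous and bounded, and because $G_i(\cdot, r)$ and its first two derivatives decay exponentially uniformly in small $r$, dominated convergence makes the interchange rigorous. This yields $F_1, F_2 \in C^2(\R, \R)$ with bounded first and second derivatives. To obtain the crucial pointwise bounds $F_1(\overline{\phi}, \underline{\psi}) \le \overline{\phi}$ and $F_2(\overline{\phi}, \overline{\psi}) \le \overline{\psi}$, I would rewrite the quasi-upper inequalities as $H_1(\overline{\phi}, \underline{\psi})(s) \le -D_1 \overline{\phi}''(s) + c \overline{\phi}'(s+r_1) + \beta_1 \overline{\phi}(s+r_1)$ a.e. (and analogously for $H_2$), then convolve against $G_1(t-s, r) \ge 0$. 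The Green identity used in the proof of Lemma \ref{LVlem2}(i) collapses the right-hand side back to $\overline{\phi}(t)$; essential boundedness of $\overline{\phi}''$ together with $L^1$-integrability of $G_1$ legitimizes integrating the a.e. inequality. The dual bounds $F_1(\underline{\phi}, \overline{\psi}) \ge \underline{\phi}$ and $F_2(\underline{\phi}, \underline{\psi}) \ge \underline{\psi}$ follow the same way from the quasi-lower inequalities.

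To verify the upper-solution differential inequality, let $\overline{\Phi}^{\ast} = F_1(\overline{\phi}, \underline{\psi})$ and $\underline{\Psi}^{\ast} = F_2(\underline{\phi}, \underline{\psi})$. Using the defining ODE $D_1 (\overline{\Phi}^{\ast})'' - c (\overline{\Phi}^{\ast})'(\cdot + r_1) - \beta_1 \overline{\Phi}^{\ast}(\cdot + r_1) + H_1(\overline{\phi}, \underline{\psi}) = 0$, the target expression $D_1(\overline{\Phi}^{\ast})''(t)-c(\overline{\Phi}^{\ast})'(t+r_1)+f_{1c}(\overline{\Phi}^{\ast}_t,\underline{\Psi}^{\ast}_t)$ rewrites as $\beta_1 [\overline{\Phi}^{\ast}(t+r_1) - \overline{\phi}(t+r_1)] + [f_{1c}(\overline{\Phi}^{\ast}_t, \underline{\Psi}^{\ast}_t) - f_{1c}(\overline{\phi}_t, \underline{\psi}_t)]$. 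The first bracket is $\le 0$ by the preceding pointwise bound. For the second, I would interpolate through $f_{1c}(\overline{\phi}_t, \underline{\Psi}^{\ast}_t)$: (P1) together with $\overline{\Phi}^{\ast} \le \overline{\phi}$ bounds the $\phi$-piece above by $\beta_1 [\overline{\phi} - \overline{\Phi}^{\ast}](t+r_1)$, and (P2) together with $\underline{\Psi}^{\ast} \ge \underline{\psi}$ forces the $\psi$-piece non-positive. The $\beta_1$ terms cancel and the whole expression is $\le 0$. A parallel calculation using (P3)--(P4) handles the $\psi$-equation, and the quasi-lower case is entirely symmetric.

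The main obstacle is keeping the signs straight in the cross-coupled scheme: the upper-solution inequality for $\overline{\phi}$ mixes in the lower iterate $\underline{\psi}$, so I must simultaneously track $\overline{\Phi}^{\ast} \le \overline{\phi}$ and $\underline{\Psi}^{\ast} \ge \underline{\psi}$ in order to apply (PQM) in the correct direction. The constants $\beta_1, \beta_2$ are engineered precisely so that the cancellation in the last step is exact; the bookkeeping for (P1)--(P4) and the interpolation sign choices is the delicate part, as is making sure the essentially bounded $\overline{\phi}'', \overline{\psi}''$ survive the integration against $G_i$ to produce genuinely $C^2$ iterates.
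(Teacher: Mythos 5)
Your proposal is correct and follows essentially the same route as the paper's proof, which simply defers to Proposition 4.6 of \cite{BarkNguy}: apply the bounded-solution operator $F$ to gain $C^2$ regularity, obtain the comparisons $F_1(\overline{\phi},\underline{\psi})\le\overline{\phi}$, $F_2(\underline{\phi},\underline{\psi})\ge\underline{\psi}$ by integrating the a.e.\ quasi-inequalities against the positive kernel, and then deduce the upper/lower-solution differential inequalities from the monotone (PQM) properties of $H$ --- your interpolation through $f_{1c}(\overline{\phi}_t,\underline{\Psi}^{\ast}_t)$ is precisely the ``monotone property'' step the paper invokes. The only cosmetic difference is that you justify the regularity by differentiating the convolution with $G_i$ directly, whereas the paper cites the $W^{1,\infty}$--$L^{\infty}$ isomorphism of \cite{mal} and uniqueness of bounded solutions for that step.
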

\begin{proof} This can be done in the same manner for Proposition (4.6) in \cite{BarkNguy}.  Their proof hinges on the construction of an isomorphism between $W^{1,\infty}$ and $L^\infty$, so if $\varphi \in L^{\infty}$ for $T: (\varphi , \varphi')^T\mapsto (0,f).$ This is defined by \cite[Theorem 4.1]{mal}. Also, there exists a unique bounded function $w\in C^2(\R,\R)$ (see e.g. \cite{murnaimin,pru}) such that 
$$
 Dw''(t)-cw'(t+r_1)-\beta w(t+r_1)=-\beta\varphi(t+r_1)-f^c(\varphi_{t+r_1}).
$$  The inequalties can be shown using the monotone property.   
\end{proof}

\begin{corollary}
Let $(\phi,\psi)$ be a quasi- upper solution (quasi-lower solution, respectively) of Eq. (\ref{LV3}). Then, there exists a monotone traveling wave solution of the system (\ref{LV3}).   
\end{corollary}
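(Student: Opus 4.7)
The plan is to combine the preceding proposition with the main existence theorem. Starting from a quasi-upper solution $\overline{\Phi}=(\overline{\phi},\overline{\psi})$ and a quasi-lower solution $\underline{\Phi}=(\underline{\phi},\underline{\psi})$, the proposition immediately tells us that $F(\overline{\Phi})$ and $F(\underline{\Phi})$ are genuine upper and lower solutions (in the $C^2$ sense) of \eqref{LV3}. So the first step would simply be to apply the proposition once to each input and rename the outputs $\overline{\Phi}^{*}:=F(\overline{\Phi})$ and $\underline{\Phi}^{*}:=F(\underline{\Phi})$.

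Next I would verify that the pair $(\overline{\Phi}^{*},\underline{\Phi}^{*})$ satisfies all the hypotheses of the main theorem. The required ordering $0\le \underline{\phi}^{*}\le \overline{\phi}^{*}\le k_1$ and $0\le \underline{\psi}^{*}\le \overline{\psi}^{*}\le k_2$ follows from the monotonicity properties of $F_1$ and $F_2$ laid out in Lemma \ref{lemF}: since $\underline{\phi}\le \overline{\phi}$ and $\underline{\psi}\le \overline{\psi}$, the cross-monotonicity gives $F_1(\underline{\phi},\overline{\psi})\le F_1(\overline{\phi},\underline{\psi})$ and $F_2(\overline{\phi},\underline{\psi})\le F_2(\underline{\phi},\overline{\psi})$, and the upper/lower bounds by $k_1,k_2$ are inherited from the fact that $(\phi,\psi)\in\Gamma_1$ implies $F(\phi,\psi)\in\Gamma_1$. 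The monotonicity in $t$ of $\overline{\psi}^{*}$ and $\underline{\psi}^{*}$ is exactly part (1) of Lemma \ref{lemF}, so both functions land in $\Gamma_2$.

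Finally I would invoke the main theorem on the pair $(\overline{\Phi}^{*},\underline{\Phi}^{*})$ to produce a monotone traveling wave solution of \eqref{LV3}. Because the theorem already wraps in the asymptotic boundary conditions through membership in $\Gamma_2$ together with the squeeze $\underline{\Phi}^{*}\le (\phi,\psi)\le \overline{\Phi}^{*}\le (\widehat{k_1},\widehat{k_2})$, no further argument about limits at $\pm\infty$ is needed beyond what was already done in the theorem's proof.

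The main obstacle I anticipate is the bookkeeping around part (1) of Lemma \ref{lemF}: one actually needs the components $\overline{\psi},\underline{\psi}$ of the input quasi-solutions (or at least of their first iterates) to be nondecreasing so that the hypothesis of that lemma is met when applied to $F_2$. If that monotonicity is not already present in the definition of quasi-solution, the argument would have to either bake it into the construction of quasi-solutions (as is done for the explicit Lotka--Volterra example in Section 4) or iterate $F$ once more, using parts (2)--(4) of Lemma \ref{lemF} to propagate monotonicity and the ordering forward, then cite the theorem on that second iterate. Either way, the rest of the argument is purely a matter of invoking earlier results in the correct order.
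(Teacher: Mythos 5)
Your proposal is correct and takes essentially the same route as the paper, which states this corollary without any written proof precisely because it is the immediate combination of the preceding Proposition (one application of $F$ turns quasi-upper/lower solutions into genuine upper/lower solutions) with the main existence theorem. Your additional bookkeeping — deriving the ordering of the iterates from Lemma \ref{lemF} and checking membership in $\Gamma_2$, including the caveat about the nondecreasing-in-$t$ requirement — only makes explicit what the paper leaves tacit.
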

\section{Applications}
\noindent We will consider the system of equations (\ref{LV1}). That is,
\begin{align}
\frac{\partial u (x,t)}{\partial t}&=D_1\frac{\partial^2u(x,t-\tau_1)}{\partial x^2}+\alpha_1 u(x,t)\left(1-u(x,t)-av(x,t-\tau_2)\right) \\
\frac{\partial v (x,t)}{\partial t}&=D_2\frac{\partial^2v(x,t-\tau_3)}{\partial x^2}+\alpha_2v(x,t)\left(1-v(x,t)-bu(x,t-\tau_4)\right). \nonumber
\end{align}
\begin{lemma} Define
\setlength{\abovedisplayskip}{10pt}
\setlength{\belowdisplayskip}{0pt}
\setlength{\abovedisplayshortskip}{0pt}
\setlength{\belowdisplayshortskip}{0pt}
\begin{eqnarray*}
&f_{1}&(\phi, \psi)=\alpha_1\phi(0)\left(1-\phi(0)-a\psi(-r_2)\right) \\
&f_{2}&(\phi, \psi)= \alpha_2\psi(0)\left(1-\psi(0)-b\phi(-r_4)\right),  
\end{eqnarray*}
\setlength{\abovedisplayskip}{10pt}
\setlength{\belowdisplayskip}{0pt}
\setlength{\abovedisplayshortskip}{0pt}
\setlength{\belowdisplayshortskip}{0pt}
where, $\phi, \psi \in \  C\left([-c\tau, 0],\R^2\right), \ \tau=\max\{\tau_1,\tau_3\}.$
Then $f_{1}(\phi, \psi), f_{2}(\phi, \psi)$ satisfy $ (PQM).$
\end{lemma}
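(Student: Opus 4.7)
The plan is to verify each of (P1)--(P4) by direct computation, exploiting that $f_1$ is quadratic in $\phi(0)$ and linear in $\psi(-r_2)$, while $f_2$ is quadratic in $\psi(0)$ and linear in $\phi(-r_4)$. The ``decreasing'' conditions (P2) and (P4) then fall out by inspection, while the ``quasi-monotone'' conditions (P1) and (P3) reduce to choosing $\beta_1,\beta_2$ large enough to dominate the quadratic defect over the admissible rectangle $[0,k_1]\times[0,k_2]$.

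For (P2) a direct expansion gives
\[
 f_1(\phi_1,\psi_1)-f_1(\phi_1,\psi_2) \;=\; -\,a\alpha_1\,\phi_1(0)\,\bigl[\psi_1(-r_2)-\psi_2(-r_2)\bigr] \;\le\; 0,
\]
because $\phi_1(0)\ge 0$, $a,\alpha_1>0$, and $\psi_1\ge\psi_2$. The computation for (P4) is entirely parallel:
\[
 f_2(\phi_1,\psi_1)-f_2(\phi_2,\psi_1) \;=\; -\,b\alpha_2\,\psi_1(0)\,\bigl[\phi_1(-r_4)-\phi_2(-r_4)\bigr] \;\le\; 0.
\]

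For (P1), writing $x_i=\phi_i(0)$ and $y=a\psi_1(-r_2)$, I would factor
\[
 f_1(\phi_1,\psi_1)-f_1(\phi_2,\psi_1) \;=\; \alpha_1\,(x_1-x_2)\,\bigl[\,1-x_1-x_2-y\,\bigr],
\]
so that adding $\beta_1[x_1-x_2]$ yields $(x_1-x_2)\bigl[\alpha_1(1-x_1-x_2-y)+\beta_1\bigr]$. Since $x_1+x_2\le 2k_1$ and $y\le ak_2$, this is non-negative whenever
\[
 \beta_1 \;\ge\; \alpha_1\,\max\bigl\{0,\; 2k_1+ak_2-1\bigr\}.
\]
Condition (P3) is then handled by the symmetric factorization of $f_2(\phi_1,\psi_1)-f_2(\phi_1,\psi_2)$, and one takes $\beta_2\ge \alpha_2\max\{0,\,2k_2+bk_1-1\}$.

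The only real ``obstacle'' is bookkeeping: one must fix a single pair of constants $\beta_1,\beta_2$ that works uniformly over every admissible $(\phi_1,\phi_2,\psi_1,\psi_2)$. Because the problematic factor $1-x_1-x_2-y$ (and its analogue) is affine in its arguments on the compact hypercube $[0,k_1]^2\times[0,k_2]^2$, its extremum is attained at the corner $x_1=x_2=k_1$, $y=ak_2$, and the worst-case estimate above suffices. Once (P1)--(P4) are in hand, the lemma follows from the definition of (PQM).
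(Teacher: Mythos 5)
Your proposal is correct and is essentially the paper's own argument: the same factorization $f_1(\phi_1,\psi_1)-f_1(\phi_2,\psi_1)=\alpha_1\bigl(\phi_1(0)-\phi_2(0)\bigr)\bigl[1-\phi_1(0)-\phi_2(0)-a\psi_1(-r_2)\bigr]$ followed by the corner estimate leading to $\beta_1\ge\alpha_1(2k_1+ak_2-1)$, and the same one-line computation for the decreasing condition; the paper simply stops after $f_1$ and invokes symmetry for $f_2$, whereas you write out (P4) and the explicit threshold $\beta_2\ge\alpha_2\max\{0,\,2k_2+bk_1-1\}$ (you also retain the factor $\phi_1(0)$ that the paper inadvertently drops in its display for (P2)).

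One caveat worth recording: what you (and, implicitly, the paper) verify as (P3) is the single-variable inequality $f_2(\phi_1,\psi_1)-f_2(\phi_1,\psi_2)+\beta_2[\psi_1(0)-\psi_2(0)]\ge 0$. Condition (P3) as literally written in the (PQM) definition compares $f_2(\phi_1,\psi_1)$ with $f_2(\phi_2,\psi_2)$, with both arguments varying, and that version is false for this $f_2$: taking $\psi_1=\psi_2$ with $\psi_1(0)>0$ and $\phi_1>\phi_2$ gives $f_2(\phi_1,\psi_1)-f_2(\phi_2,\psi_2)=-\alpha_2 b\,\psi_1(0)\bigl(\phi_1(-r_4)-\phi_2(-r_4)\bigr)<0$ while the $\beta_2$ term vanishes. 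So your reading coincides with what the cross-iteration scheme actually uses and with what the paper's ``in the same manner'' remark intends, but it does not, and cannot, establish (P3) in its literal two-variable form; that discrepancy lies in the stated definition rather than in your computation.
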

\noindent  For the sake of brevity we only need to show that  $f_{1}(\phi, \psi)$ satisfies the conditions, because $f_{2}(\phi, \psi)$ can be shown in the same manner. To this end take $0\le\phi_2(s)\le\phi_1(s)\le k_1, \ 0\le\psi_2(s)\le\psi_1(s)\le k_2,$ where $\phi_i, \ \psi_i\in C([-c\tau,0],\R).$ For $i.)$ we see by  direct calculation
 
 \begin{align*}
 &f_{1c}\left(\phi_1,\psi_1\right)-f_{1c}\left(\phi_2,\psi_1\right) \\
 &=\alpha_1\phi_1(0)\left(1-\phi_1(0)-a\psi_1(-r_2)\right)-\alpha_1\phi_2(0)\left(1-\phi_2(0)-a\psi_1(-r_2)\right)\\
&=\alpha_1\left[\left(\phi_1(0)-\phi_2(0)\right)-\left(\phi^2_1(0)-\phi^2_2(0)\right)-a\psi_1(-r_2)\right]\\
&=\alpha_1\left(\phi_1(0)-\phi_2(0)\right)\left[1-a\psi_1(-r_2)-\left(\phi_1(0)+\phi_2(0)\right)\right]\\
&\ge \alpha_1\left(\phi_1(0)-\phi_2(0)\right)\left[1-a k_2-2 k_1\right].
 \end{align*}
\noindent Fix $\beta_1>0$ such that $-\beta_1\le \alpha_1(1-a k_2-2 k_1),$ then 
\begin{align*}
 &f_{1c}\left(\phi_1,\psi_1\right)-f_{1c}\left(\phi_2,\psi_1\right)+\beta_1(\phi_1(0)-\phi_2(0))\\
 &\ge  \left(\phi_1(0)-\phi_2(0)\right)\left[\alpha_1\left(1-a k_2-2 k_1\right)+\beta_1\right]\ge 0.
\end{align*}
The proof for part $ii.)$ is also done via direct calculation and rather straight forward. In fact, 
\begin{align*}
&f_{1c}\left(\phi_1,\psi_1\right)-f_{1c}\left(\phi_1,\psi_2\right)=\alpha_1\phi_1(0)\left(1-\phi_1(0)-a\psi_1(-r_2)\right)-\alpha_1\phi_1(0)\left(1-\phi_1(0)-a\psi_2(-r_2)\right)\\
&=\alpha_1 a \left(\psi_2(-r_2)-\psi_1(-r_2)\right)\le 0,
\end{align*}
since $0\le\psi_2(s)\le \psi_1(s)\le k_2$ for all $s\in [-c\tau,0].$
We define traveling wave solutions as $u(x,t)=\phi(x+ct), \ v(x,t)=\psi(x+ct),$ where $ c>0$ is the wave speed. Applying the wave transformation and letting $\xi=x+ct$ gives
\begin{align*}
&u(x,t-\tau_4)=\phi(x+c(t-\tau_4)=  \phi(x+ct-c\tau_4)=\phi(\xi-c\tau_4) \\
&\frac{\partial u(x,t)}{\partial t}=c\phi'(x+ct)=c\phi'(\xi)\\
&\frac{\partial^2 u(x,t-\tau_1)}{\partial x^2}=\phi''(x+c(t-\tau_1))=\phi''(\xi-c\tau_1) 
\end{align*}
For the function $v(x,t)$ we see
\begin{align*}
&v(x,t-\tau_2)=\psi(x+c(t-\tau_2)=  \psi(x+ct-c\tau_2)=\psi(\xi-c\tau_2)\\
&\frac{\partial v(x,t)}{\partial t}=c\psi'(x+ct)=c\psi'(\xi)\\
&\frac{\partial^2 v(x,t-\tau_3)}{\partial x^2}=\psi''(x+c(t-\tau_3))=\psi''(\xi-c\tau_3).
\end{align*}
Furthermore, Taking $r_i=c\tau_i, \ i=1,2,3,4$, $D_1, D_2=1$ and for simplicity we move the delay out of the delay term by taking $t=\xi-r_i, \i=1,3$ the system (\ref{LV1}) becomes
\begin{align}\label{LV2}
&\phi''(t)-c\phi'(t+r_1)+\alpha_1\phi(t+r_1)\left(1-\phi(t+r_1)-a\psi(t+(r_1-r_2))\right) \\
&\psi''(t)-c\psi'(t+r_3)+\alpha_2\psi(t+r_3)\left(1-\psi(t+r_3)-b\phi(t+(r_3-r_4))\right). \nonumber
\end{align}

\medskip
\noindent
The construction of the quasi-upper/lower solutions will be similar to that of the Belousov-Zhabotinskii Equations in the application section of \cite{BarkNguy} and (PQM).  In fact, we can use  Rouch\'e's Theorem to proof the following claim.
 
\begin{claim}\label{claim 1}
Consider, $P_1(\lambda)=\lambda^{2}-c\lambda+\alpha_1=0,$ where 
\[\lambda_{0}=\frac{c+\sqrt{c^{2}-4\alpha_1}}{2}\] 
\medskip
\noindent
is the positive root. Let $c>2\sqrt{\alpha_1}$ and $U_1$ be an open strip $\{ z\in \C\ |\ \lambda_0-\epsilon_1 < \Re z < \lambda_0+\epsilon_1\}$ so that it does not include the other root of $P_1(\lambda)$ in it. Then, for sufficiently small $r_1$ there exists only a single root $\lambda_1(r_1)$ of the equation
\begin{equation}\label{C1}
\lambda^2-c\lambda e^{r_1\lambda}+\alpha_1 e^{r_1\lambda }=0.
\end{equation}
\medskip
\noindent
in $U_1$ that depends continuously on $r_1$. Moreover,  $\lambda_1(r_1)$ is real and
\begin{equation}
\lim_{s\to 0} \lambda_1(r_1) =\lambda_0 .
\end{equation}
\end{claim}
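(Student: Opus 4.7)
The plan is to apply Rouch\'e's theorem by comparing the entire function
\[Q(\lambda,r_1) := \lambda^2 - c\lambda e^{r_1\lambda} + \alpha_1 e^{r_1\lambda}\]
with the unperturbed polynomial $P_1(\lambda) = \lambda^2 - c\lambda + \alpha_1$. The perturbation admits the clean form
\[Q(\lambda,r_1) - P_1(\lambda) = (\alpha_1 - c\lambda)\bigl(e^{r_1\lambda} - 1\bigr),\]
which vanishes as $r_1 \to 0^+$ and is controllable on bounded sets because $\Re \lambda$ is confined to $(\lambda_0 - \epsilon_1, \lambda_0 + \epsilon_1)$ throughout $U_1$.

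First I would localize to a small disk. Since $c > 2\sqrt{\alpha_1}$, the polynomial $P_1$ has two distinct real simple roots, and by hypothesis $U_1$ contains only $\lambda_0$. Pick $\rho > 0$ small enough that $\overline{B(\lambda_0,\rho)} \subset U_1$ and $P_1$ has no other zeros in this closed disk. Then $m := \min_{|\lambda - \lambda_0| = \rho} |P_1(\lambda)| > 0$, and by uniform continuity of $(\alpha_1 - c\lambda)(e^{r_1\lambda} - 1)$ on compact subsets of $\C \times [0,1]$, one can choose $r_1^* > 0$ so that $|Q(\lambda,r_1) - P_1(\lambda)| < m$ on $\partial B(\lambda_0,\rho)$ for all $r_1 \in (0, r_1^*)$. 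Rouch\'e's theorem then produces exactly one root $\lambda_1(r_1)$ of $Q(\cdot, r_1)$ inside $B(\lambda_0,\rho)$.

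Next I would rule out additional roots in $U_1 \setminus B(\lambda_0,\rho)$. Writing $\lambda = x + iy$, on $U_1$ we have $|e^{r_1\lambda}| = e^{r_1 x} \leq M$ for a constant $M$ independent of $r_1 \in (0,1]$, so
\[|Q(\lambda,r_1)| \geq x^2 + y^2 - M\bigl(c\sqrt{x^2+y^2} + \alpha_1\bigr),\]
whose right-hand side becomes positive once $|y| \geq R$ for $R$ chosen large, uniformly in small $r_1$. On the compact remainder $\bigl(\overline{U_1} \cap \{|\Im\lambda| \leq R\}\bigr) \setminus B(\lambda_0,\rho)$ (shrinking the open strip slightly to obtain compactness), $P_1$ is bounded below in modulus, and a second application of uniform continuity forces $|Q(\lambda,r_1)| > 0$ there as well for $r_1$ small enough. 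Combined with the Rouch\'e step, this establishes uniqueness in all of $U_1$.

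Finally, continuity of $\lambda_1(r_1)$ at $r_1 = 0$ follows from the implicit function theorem applied to $Q(\lambda,r_1) = 0$ at $(\lambda_0, 0)$, since
\[\partial_\lambda Q(\lambda_0, 0) = P_1'(\lambda_0) = 2\lambda_0 - c = \sqrt{c^2 - 4\alpha_1} \neq 0,\]
and the resulting smooth branch gives $\lim_{r_1 \to 0^+} \lambda_1(r_1) = \lambda_0$. Realness is then automatic: the power series coefficients of $Q(\cdot, r_1)$ are real, so its roots come in complex-conjugate pairs; since $U_1$ is symmetric about the real axis and carries only the single root $\lambda_1(r_1)$, that root must equal its own conjugate and hence be real. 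The principal obstacle I anticipate is the second step, ruling out roots in the unbounded portion of the strip, which relies on the crude but decisive estimate that the quadratic term dominates the bounded exponential perturbation at large $|\Im\lambda|$.
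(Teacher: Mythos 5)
Your proposal is correct and follows essentially the same route the paper intends: the paper offers no detailed argument for this claim, merely asserting that it follows from Rouch\'e's Theorem as in the corresponding lemma of \cite{BarkNguy}, and your write-up supplies exactly those details (local Rouch\'e comparison of $\lambda^2-c\lambda e^{r_1\lambda}+\alpha_1 e^{r_1\lambda}$ with $P_1$ near $\lambda_0$, exclusion of roots in the unbounded part of the strip by quadratic dominance, the implicit function theorem at $(\lambda_0,0)$ for continuity, and conjugation symmetry for realness). No substantive discrepancy with the paper's approach.
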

\medskip
\noindent
We also have the following claim by the same reasoning.
\begin{claim}\label{claim 2}
Consider, $P_2(\mu)=\mu^{2}-c\mu+\alpha_2=0,$ where 
\[\mu_{0}=\frac{c+\sqrt{c^{2}-4\alpha_2}}{2}\] is the positive root. Let $c>2\sqrt{\alpha_2}$ and $U_2$ be an open strip $\{ z\in \C\ |\ \mu_0-\epsilon_2 < \Re z < \mu_0+\epsilon_2\}$ so that it does not include the other root of $P_2(\mu)$ in it. Then, for sufficiently small $r_1$ there exists only a single root $\mu_1(r_3)$ of the equation
\begin{equation}\label{C2}
\mu^2-c\mu e^{r_3\mu}+\alpha_2e^{r_3\mu }=0.
\end{equation}
in $U_2$ that depends continuously on $r_3$. Moreover,  $\mu_1(r_3)$ is real and
\begin{equation}
\lim_{s\to 0} \mu_1(r_3) =\mu_0.
\end{equation}
\end{claim}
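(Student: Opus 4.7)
The plan is to mirror the Rouch\'e-theorem argument sketched for Claim \ref{claim 1}, replacing the data $(\alpha_1, r_1, \lambda_0, U_1)$ by $(\alpha_2, r_3, \mu_0, U_2)$. Consider the entire function
\[
g(\mu, r_3) := \mu^2 - c\mu e^{r_3\mu} + \alpha_2 e^{r_3\mu},
\]
so that $g(\mu, 0) = P_2(\mu)$. Since $c > 2\sqrt{\alpha_2}$, the polynomial $P_2$ has two distinct real positive roots, and $U_2$ is chosen to isolate only the larger one, $\mu_0$. I would fix a closed disk $\overline{D(\mu_0,\rho)} \subset U_2$ small enough that $\mu_0$ is the only zero of $P_2$ inside it. Because this zero is simple, the quantity $m := \min_{|\mu - \mu_0| = \rho} |P_2(\mu)|$ is strictly positive.

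Next I would set up Rouch\'e's theorem via the decomposition
\[
g(\mu, r_3) - P_2(\mu) = (-c\mu + \alpha_2)\bigl(e^{r_3\mu} - 1\bigr).
\]
On the compact circle $\{|\mu - \mu_0| = \rho\}$ the factor $(-c\mu + \alpha_2)$ is bounded while $|e^{r_3\mu} - 1| \to 0$ uniformly as $r_3 \to 0^+$. Hence, for all sufficiently small $r_3 > 0$, one has $|g(\mu, r_3) - P_2(\mu)| < m \le |P_2(\mu)|$ on this circle, and Rouch\'e's theorem yields exactly one zero $\mu_1(r_3)$ of $g(\cdot, r_3)$ inside $D(\mu_0, \rho)$.

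To conclude that $\mu_1(r_3)$ is real, I would observe that $g(\cdot, r_3)$ has real coefficients, so $\overline{\mu_1(r_3)}$ is also a zero; since the disk $D(\mu_0, \rho)$ is symmetric about the real axis and contains only one zero of $g$, this forces $\mu_1(r_3) \in \R$. For continuous dependence on $r_3$, I would invoke the contour-integral representation
\[
\mu_1(r_3) = \frac{1}{2\pi i}\oint_{|\mu - \mu_0| = \rho} \mu \, \frac{\partial_\mu g(\mu, r_3)}{g(\mu, r_3)}\, d\mu,
\]
whose integrand is jointly continuous in $(\mu, r_3)$ with the denominator bounded away from $0$ on the contour by the Rouch\'e estimate; continuity of parameter integrals then transfers to $\mu_1$. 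Letting $\rho \to 0$ and correspondingly shrinking the admissible range of $r_3$ gives $\lim_{r_3 \to 0}\mu_1(r_3) = \mu_0$.

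The one delicate point is the uniform smallness of $|e^{r_3\mu} - 1|$ on the contour relative to the fixed floor $m$; once the circle has been fixed (and hence compact), this reduces to the elementary estimate $|e^{r_3\mu}-1|\le r_3|\mu|\,e^{r_3|\mu|}$. All remaining assertions (reality, uniqueness, continuity, and the limit) follow from standard complex-analytic facts applied to this perturbation of $P_2$, exactly as in the proof of Claim \ref{claim 1}.
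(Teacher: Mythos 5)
Your route is the one the paper intends: the paper gives no separate argument for this claim, asserting it ``by the same reasoning'' as Claim \ref{claim 1}, i.e.\ a Rouch\'e-type perturbation of the quadratic $P_2$, and your decomposition $g(\mu,r_3)-P_2(\mu)=(-c\mu+\alpha_2)\bigl(e^{r_3\mu}-1\bigr)$, the conjugate-symmetry argument for reality, and the contour-integral formula for continuous dependence on $r_3$ are the standard way to carry this out. Those parts are sound: Rouch\'e gives exactly one zero (with multiplicity) in a disk symmetric about the real axis, real coefficients force it to be real, and shrinking the radius yields $\mu_1(r_3)\to\mu_0$ as $r_3\to 0$.

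There is, however, a gap between what you prove and what the claim asserts. Your Rouch\'e argument on the circle $|\mu-\mu_0|=\rho$ establishes existence and uniqueness of the zero only inside the small disk $D(\mu_0,\rho)$, whereas the claim asserts that $\mu_1(r_3)$ is the \emph{only} root of (\ref{C2}) in the entire unbounded strip $U_2=\{\mu_0-\epsilon_2<\Re z<\mu_0+\epsilon_2\}$; the transcendental perturbation could in principle introduce zeros elsewhere in the strip, in particular with large imaginary part. To close this you need two further observations. First, for $\mu\in U_2$ the factor $|e^{r_3\mu}|=e^{r_3\Re\mu}$ is bounded uniformly for, say, $0\le r_3\le 1$, so $|\mu^2|$ grows like $|\Im\mu|^2$ while $|e^{r_3\mu}(c\mu-\alpha_2)|=O(|\Im\mu|)$; hence there is a $Y>0$, independent of $r_3$, such that (\ref{C2}) has no roots in $U_2$ with $|\Im\mu|>Y$. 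Second, on the compact set $\{\mu\in\overline{U_2}:\ |\Im\mu|\le Y\}\setminus D(\mu_0,\rho)$ one has $\min|P_2|>0$ (the strip contains no other zero of $P_2$), while $g(\cdot,r_3)\to P_2$ uniformly there as $r_3\to 0$, so no roots can appear in that region for $r_3$ small. With these two additions your disk argument does yield uniqueness in all of $U_2$, and the remainder of your proof stands as written.
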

\medskip
\noindent
We also take $\alpha_2\ge \alpha_1$ such that $0<\mu_0<\lambda_0.$ Thus, $0<\mu_1<\lambda_1.$ Defining $\overline{\phi}_{1}$ and $\overline{\phi}_{2}$ as follows:
\[
\overline{\phi}_{1}(t):=\left\{
\begin{array}
[c]{l}%
\frac{1}{2}e^{\lambda_{1}t},\;\;\;\;\;\;\quad t\leq0,\\
1-\frac{1}{2}e^{-\lambda_{1}t},\quad t>0
\end{array}
\right.  \;\;\;\;\overline{\phi}_{2}(t):=\left\{
\begin{array}
[c]{l}%
\frac{1}{2}e^{\mu_{1}t},\;\;\;\;\;\quad t\leq0,\\
1-\frac{1}{2}e^{-\mu_{1}t},\quad t>0,
\end{array}
\right.
\]
and we set
$$
\underline{\varphi_2}(t)=0, t\in \R ,\quad 
\underline{ \varphi_1}(t)=\begin{cases} \frac{ e^{\lambda_2t}}{4}, \ t< -T ,\\
f(t), \ -T \le  t \le T \\
\frac{1}{2} , \ t> T ,
\end{cases}
$$
where 
\begin{align*}
f(t)&= a(t-T)^3+b(t-T)^2 +\frac{1}{2},
\end{align*}
and $T$ is a large number. The function $f(t)$ enjoys the following properties:
\begin{enumerate}
\item This bridges smoothly the function $e^{\lambda_2t}/4$ and the constant function $1/2$
\item $f(-T)=(1/4)e^{-\lambda_2T}$, $f'(-T)= (\lambda_2/4)e^{-\lambda_2 T}$, $f'(T)=0$, $f(T)=1/2$.
\end{enumerate}
Here, $a$ and $b$ are easily found via simple calculation. In fact,
\begin{align*}
a&=\frac{\lambda_2 Te^{-\lambda_2 T}+e^{-\lambda_2 T}-2}{16T^3}\\
b&=\frac{\lambda_2 Te^{-\lambda_2 T}+6\left( \frac{e^{-\lambda_2 T}}{4}-\frac{1}{2}\right)}{8T^2}.
\end{align*}
We also have the following claim. 
\begin{claim}
 Define $\overline{\phi}_{1},\overline{\phi}_{2},\underline{\phi_1}, \underline{\phi_2}$ as above, then $ \left(\left(\underline{\phi_1}, \underline{\phi_2}\right),\left(\overline{\phi}_{1},\overline{\phi}_{2}\right)\right)\in \Gamma_2.$  
\end{claim}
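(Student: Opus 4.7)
The plan is to verify directly the three defining properties of $\Gamma_2$ for the constructed pair: (a) continuity of every constituent function, (b) monotonicity of the second component, and (c) the pointwise sandwich $0\le\underline{\phi_i}(t)\le\overline{\phi}_i(t)\le k_i$ for $i=1,2$ and all $t\in\R$, where in the present application $k_1=k_2=1$.

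Continuity and monotonicity can be read off the explicit formulas. The upper functions $\overline{\phi}_1,\overline{\phi}_2$ are piecewise defined on $t\le 0$ and $t>0$, with both branches taking the common value $\tfrac{1}{2}$ at $t=0$ and having positive one-sided derivatives that agree at the origin (namely $\tfrac{\lambda_1}{2}$ and $\tfrac{\mu_1}{2}$ respectively). Hence $\overline{\phi}_1,\overline{\phi}_2\in C^1(\R)$ and are strictly increasing, which in particular gives $\overline{\phi}_2$ the required monotonicity. The lower function $\underline{\phi_2}\equiv 0$ is trivially continuous and nondecreasing, so property (b) holds. For $\underline{\phi_1}$, the four matching identities imposed on the cubic bridge $f$ at $t=\pm T$ (two values and two derivatives) are precisely what is needed to glue the outer exponential and constant pieces in a $C^1$ fashion.

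For the sandwich condition, the nontrivial inequalities are $0\le\underline{\phi_1}\le\overline{\phi}_1\le 1$ (the analogous bounds for $i=2$ are immediate from the formulas for $\overline{\phi}_2$). The ceiling $\overline{\phi}_1\le 1$ is obvious. On the left tail $t\le -T$, the inequality $\tfrac{1}{4}e^{\lambda_2 t}\le\tfrac{1}{2}e^{\lambda_1 t}$ rearranges to $e^{(\lambda_2-\lambda_1)t}\le 2$, which holds for all $t\le 0$ provided $\lambda_2\ge\lambda_1$ as in the Belousov--Zhabotinskii construction of \cite{BarkNguy}; the coefficient ratio $1/4$ vs.\ $1/2$ gives the needed cushion. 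On the right tail $t\ge T$, the comparison $\tfrac{1}{2}\le 1-\tfrac{1}{2}e^{-\lambda_1 t}$ is immediate since $e^{-\lambda_1 t}\le e^{-\lambda_1 T}<1$.

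The main obstacle, and the only step of genuine content, is the core interval $[-T,T]$, where one must establish $0\le f(t)\le\overline{\phi}_1(t)$. The strategy is to exploit the explicit closed forms for $a$ and $b$: for $T$ large one has $a=O(T^{-3})$ and $b=O(T^{-2})$, so $f$ is a small $C^2$ perturbation of the constant $\tfrac{1}{2}$ that interpolates monotonically between its prescribed boundary values $\tfrac{1}{4}e^{-\lambda_2 T}$ at $t=-T$ and $\tfrac{1}{2}$ at $t=T$. Since on $[-T,T]$ one has the lower bound $\overline{\phi}_1(t)\ge\tfrac{1}{2}e^{-\lambda_1 T}$, enlarging $T$ (and invoking $\lambda_2\ge\lambda_1$) makes both $f\ge 0$ and $f\le\overline{\phi}_1$ hold uniformly on the core interval. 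Everything else in the verification reduces to inspection of the explicit formulas.
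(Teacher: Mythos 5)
Your framework (regularity, monotonicity of the second components, and the pointwise ordering $0\le\underline{\phi_i}\le\overline{\phi}_i\le 1$) is the right reduction, and your tail estimates and $C^1$-gluing checks are fine; the paper itself states this claim without proof, so the entire burden is precisely the step you call the ``only step of genuine content,'' namely $f\le\overline{\phi}_1$ on the core interval. That is where your argument breaks down. First, the description of $f$ as a small $C^2$ perturbation of the constant $\tfrac12$ is internally inconsistent: $f(-T)=\tfrac14 e^{-\lambda_2 T}\approx 0$ while $f(T)=\tfrac12$, so $f$ sweeps out essentially the whole range $[0,\tfrac12]$ on $[-T,T]$; smallness of $a$ and $b$ only reflects the length of the interval, not smallness of the variation of $f$. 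Second, and decisively, the mechanism ``enlarge $T$'' works in the wrong direction. On $[-T,0]$ the upper function is $\overline{\phi}_1(t)=\tfrac12 e^{\lambda_1 t}$, which at any point $t=-\theta T$ with fixed $\theta\in(0,1)$ is exponentially small in $T$, whereas from the paper's formulas $aT^3\to-\tfrac18$ and $bT^2\to-\tfrac38$, so that $f(-\theta T)\to\frac{(1+\theta)^3-3(1+\theta)^2+4}{8}$, which is a strictly positive constant (for instance $f(0)\to\tfrac14$ and $f(-T/2)\to\tfrac{5}{64}$). Hence for large $T$ one has $\underline{\phi_1}(-\theta T)>\overline{\phi}_1(-\theta T)$ on a large portion of $[-T,0]$: the inequality you are trying to force by enlarging $T$ is actually false in that regime, and your stated lower bound $\overline{\phi}_1\ge\tfrac12 e^{-\lambda_1 T}$ on $[-T,T]$, while true, tends to $0$ and therefore cannot dominate $f$.

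Concretely, a correct verification cannot rely on taking $T$ large; it must either constrain $T$ so that $\lambda_1 T$ is of moderate size (so that $\tfrac12 e^{\lambda_1 t}$ stays above the cubic bridge on $[-T,0]$, which then conflicts with the later lemmas in the paper that require $T$ sufficiently large), or replace $\underline{\phi_1}$ by a lower function that decays at least as fast as $e^{\lambda_1 t}$ as $t\to-\infty$ and stays below $\tfrac12 e^{\lambda_1 t}$ on the transition region (the standard choice of the form $\max\{0,(1-Me^{\varepsilon t})e^{\lambda_1 t}\}$ accomplishes this). You should also state explicitly what $\lambda_2$ is and why $\lambda_2\ge\lambda_1$ may be assumed, since the paper never defines it; your left-tail estimate depends on it.
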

\begin{lemma}
 For sufficiently small $r_1, r_2$ and $c>\max\{2\sqrt{\alpha_1},2\sqrt{\alpha_2}\}$ where $\overline{\phi}_{1},\overline{\phi}_{2},\underline{\phi_1}, \underline{\phi_2}$ for all $ t\in \R$, then $\left(\overline{\phi}_{1},\overline{\phi}_{2}\right)^T$ is a quasi-upper solution of Eq. \ref{LV2}.  
\end{lemma}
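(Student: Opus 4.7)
The plan is to verify the two quasi-upper solution inequalities
\begin{align*}
& \overline{\phi}_1''(t) - c\overline{\phi}_1'(t+r_1) + \alpha_1 \overline{\phi}_1(t+r_1)\bigl[1 - \overline{\phi}_1(t+r_1) - a\,\underline{\phi}_2(t+r_1-r_2)\bigr] \le 0,\\
& \overline{\phi}_2''(t) - c\overline{\phi}_2'(t+r_3) + \alpha_2 \overline{\phi}_2(t+r_3)\bigl[1 - \overline{\phi}_2(t+r_3) - b\,\overline{\phi}_1(t+r_3-r_4)\bigr] \le 0
\end{align*}
for a.e.\ $t \in \R$. Since $\underline{\phi}_2 \equiv 0$ the coupling in the first inequality drops out entirely, and since $-\alpha_2 b\,\overline{\phi}_2(t+r_3)\,\overline{\phi}_1(t+r_3-r_4) \le 0$ the coupling in the second inequality only helps. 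Both inequalities therefore reduce to proving a scalar statement of the form
$$I(t) := \overline{\phi}''(t) - c\overline{\phi}'(t+r) + \alpha\,\overline{\phi}(t+r)\bigl(1 - \overline{\phi}(t+r)\bigr) \le 0,$$
with $(\overline{\phi}, r, \lambda, \alpha)$ equal to either $(\overline{\phi}_1, r_1, \lambda_1, \alpha_1)$ or $(\overline{\phi}_2, r_3, \mu_1, \alpha_2)$. The central algebraic tool is the characteristic identity $\lambda^2 = (c\lambda - \alpha)e^{r\lambda}$, equivalently $c\lambda - \alpha = \lambda^2 e^{-r\lambda}$, supplied by Claim~\ref{claim 1} (resp.\ Claim~\ref{claim 2}).

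The verification splits $\R$ into three regimes according to the signs of $t$ and $t+r$. In regime (a), $t+r \le 0$, both $\overline{\phi}(t)$ and $\overline{\phi}(t+r)$ lie on the exponential branch $\tfrac{1}{2}e^{\lambda s}$; substituting and invoking the characteristic identity cancels the linear part exactly, leaving $I(t) = -\tfrac{\alpha}{4}e^{2\lambda(t+r)} \le 0$. In regime (b), $t \ge 0$, both values lie on the saturating branch $1 - \tfrac{1}{2}e^{-\lambda s}$; setting $v := e^{-\lambda(t+r)}$ and using the identity to combine $\lambda^2 e^{\lambda r}$ with $c\lambda - \alpha = \lambda^2 e^{-r\lambda}$ yields
$$I(t) = -v\lambda^2\cosh(\lambda r) - \tfrac{\alpha}{4}v^2 \le 0,$$
independently of the size of $r$.

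The transition regime (c), $t \in (-r,0)$, is the main obstacle, since $\overline{\phi}(t)$ and $\overline{\phi}(t+r)$ live on different branches and no single exponential closes the algebra. Introducing $w := e^{-\lambda(t+r)} \in [e^{-\lambda r}, 1]$ and again applying the characteristic identity, one computes
$$I(t) = \frac{(c\lambda - \alpha)(1-w^2)}{2w} - \frac{\alpha w^2}{4}.$$
The first summand is non-negative but of order $O(r)$ uniformly in $w$ on this range (since $1 - w^2 \le 1 - e^{-2\lambda r} = O(r)$ and $1/w \le e^{\lambda r}$), while the second summand is bounded below in absolute value by $\tfrac{\alpha}{4}e^{-2\lambda r} \to \tfrac{\alpha}{4}$ as $r \to 0$. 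Hence for sufficiently small $r_1$ (and analogously $r_3$) the negative term dominates and $I(t) \le 0$ throughout the transition. Running the three-regime analysis once with $(r,\lambda,\alpha) = (r_1,\lambda_1,\alpha_1)$ and once with $(r_3,\mu_1,\alpha_2)$ then settles both inequalities and completes the verification that $(\overline{\phi}_1,\overline{\phi}_2)$ is a quasi-upper solution of \eqref{LV2}.
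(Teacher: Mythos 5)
Your argument is correct, and it follows the same skeleton as the paper's proof: drop the nonpositive coupling terms, split $\R$ into the two pure-branch regions and the transition interval, substitute the explicit branches, and cancel the linear part with the characteristic identity $\lambda^2-c\lambda e^{r\lambda}+\alpha e^{r\lambda}=0$. The differences are in execution and they favor you. First, you decouple the system at the outset (using $\underline{\phi}_2\equiv 0$ and $-\alpha_2 b\,\overline{\phi}_2\overline{\phi}_1\le 0$) and reduce everything to one scalar inequality run twice with $(r_1,\lambda_1,\alpha_1)$ and $(r_3,\mu_1,\alpha_2)$, whereas the paper carries the coupling terms through each case; your per-equation split also sidesteps the paper's slight mismatch of using $r=\max\{r_1,r_3\}$ for both equations. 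Second, in the saturated regime $t\ge 0$ your identity $c\lambda-\alpha=\lambda^2 e^{-r\lambda}$ gives the exact expression $I(t)=-v\lambda^2\cosh(\lambda r)-\tfrac{\alpha}{4}v^2\le 0$ with no smallness condition on the delay, while the paper's Case 3 only concludes via the limit $r_1\to 0$ and the sign of $\alpha_1-c\lambda_0$; your computation shows the smallness of the delays is needed only in the transition interval. Third, your transition-regime formula is the paper's Case 2 expression in disguise, since $\tfrac{(c\lambda-\alpha)(1-w^2)}{2w}=(c\lambda-\alpha)\sinh(\lambda(t+r))$, but your uniform bound $1-w^2\le 1-e^{-2\lambda r}$ together with $\tfrac{\alpha}{4}w^2\ge\tfrac{\alpha}{4}e^{-2\lambda r}$ is tighter and more explicit than the paper's appeal to the Taylor expansion of $\sinh$. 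So: same route, with a cleaner and in one case strictly stronger verification.
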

\begin{proof}The proof will be completed in cases. Indeed, fix $r=\max\{r_1,r_3\}.$

\medskip
{\bf Case 1: $t\le - r. $} Direction substitution into the first equation of Eq. \ref{LV2} yields:

\begin{align*}
&\overline{\phi}_{1}''(t)-c\overline{\phi}_{1}'(t+r_1)+\alpha_1\overline{\phi}_{1}(t+r_1)\left(1-\overline{\phi}_{1}(t+r_1)-a\underline{\phi_2}(t+(r_1-r_2)\right)\\
&=\frac{\lambda_{1}^{2}}{2}e^{\lambda_{1}t}-c\frac{\lambda_{1}}{2}e^{\lambda_{1}(t+r_1)}+\frac{\alpha_1}{2}e^{\lambda_{1}(t+r_1)}\left(1-\frac{1}{2}e^{\lambda_{1}(t+r_1-r_2)}\right)\\
&=-\frac{\alpha_1}{2}e^{\lambda_{1}(t+r_1)}\left(\frac{1}{2}e^{\lambda_{1}(t+r_1-r_2)}\right)\le 0.
\end{align*}
For the second equation we have 
\begin{align*}
&\overline{\phi}_{2}''(t)-c\overline{\phi}_{2}'(t+r_3)+\alpha_2\overline{\phi}_{2}(t+r_3)\left(1-\overline{\phi}_{2}(t+r_3)-b\overline{\phi_1}(t+(r_3-r_4))\right)\\
&=\frac{\mu_{1}^{2}}{2}e^{\mu_{1}t}-c\frac{\mu_{1}}{2}e^{\mu_{1}(t+r_3)}+\frac{\alpha_2}{2}e^{\mu_{1}(t+r_3)}\left(1-\frac{1}{2}e^{\mu_{1}(t+r_3)}-\frac{b}{2}e^{\lambda_{1}(t+(r_3-r_4))}\right)\\ 
&=e^{\mu_{1}t}\left(\frac{\mu_{1}^{2}}{2}-c\frac{\mu_{1}}{2}e^{\mu_{1}r_3}+\frac{\alpha_2}{2}e^{\mu_{1}r_3}\right)-\frac{\alpha_2}{2}e^{\mu_{1}(t+r_3)}\left(\frac{1}{2}e^{\mu_{1}(t+r_3)}+\frac{b}{2}e^{\lambda_{1}(t+(r_3-r_4))}\right)\\
&=-\frac{\alpha_2}{2}e^{\mu_{1}(t+r_3)}\left(\frac{1}{2}e^{\mu_{1}(t+r_3)}+\frac{b}{2}e^{\lambda_{1}(t+(r_3-r_4))}\right)\le 0.
\end{align*}
{\bf Case 2: $-r\le t\le 0. $} Direct substitution into the first equation of Eq. \ref{LV2} yields:
\begin{align*}
&\overline{\phi}_{1}''(t)-c\overline{\phi}_{1}'(t+r_1)+\alpha_1\overline{\phi}_{1}(t+r_1)\left(1-\overline{\phi}_{1}(t+r_1)-a\underline{\phi_2}(t+(r_1-r_2))\right)\\
&=\frac{\lambda_{1}^{2}}{2}e^{\lambda_{1}t}-c\frac{\lambda_{1}}{2}e^{-\lambda_{1}(t+r_1)}+\alpha_1\overline{\phi}_{1}(t+r_1)\left(1-\overline{\phi}_{1}(t+r_1)\right)\\
& =\frac{\lambda_{1}^{2}}{2}e^{\lambda_{1}t}-c\frac{\lambda_{1}}{2}e^{\lambda_{1}(t+r_1)}+\frac{\alpha_{1}}{2}e^{\lambda_{1}(t+r_1)}+c\frac{\lambda_{1}}{2}e^{\lambda_{1}(t+r_1)}-\frac{\alpha_{1}}{2}e^{\lambda_{1}(t+r_1)}-c\frac{\lambda_{1}}{2}e^{-\lambda_{1}(t+r_1)}\\
&+\alpha_1\overline{\phi}_{1}(t+r_1)\left(1-\overline{\phi}_{1}(t+r_1)\right)\\
&=c\frac{\lambda_{1}}{2}e^{\lambda_{1}(t+r_1)}-\frac{\alpha_{1}}{2}e^{\lambda_{1}(t+r_1)}-c\frac{\lambda_{1}}{2}e^{-\lambda_{1}(t+r_1)}+\alpha_1\left(1-\frac{1}{2}e^{-\lambda_{1}(t+r_1)}\right)\frac{1}{2}e^{-\lambda_{1}(t+r_1)}\\
&=(c\lambda_1-\alpha_1)\sinh(\lambda_{1}(t+r_1))-\frac{\alpha_1}{4}e^{-2\lambda_{1}(t+r_1)}.
\end{align*}
Using the power expansion of $\sinh x$ we have
$
\sinh (\lambda_1(t+r_1)) \approx \lambda_1(t+r_1 )+o(r^2_1).$ Thus, for some small $r_1$ we have \[(c\lambda_1-\alpha_1)\sinh(\lambda_{1}(t+r_1))-\frac{\alpha_1}{4}e^{-2\lambda_{1}(t+r_1)}=(c\lambda_1-\alpha_1)\left(\lambda_1(t+r_1 )+o(r^2_1)\right)-\frac{\alpha_1}{4}e^{-2\lambda_{1}(t+r_1)}\le 0.\]
For the second equation we have 
\begin{align*}
&\overline{\phi}_{2}''(t)-c\overline{\phi}_{2}'(t+r_3)+\alpha_2\overline{\phi}_{2}(t+r_3)\left(1-\overline{\phi}_{2}(t+r_3)-b\overline{\phi_1}(t+(r_3-r_4))\right)\\
&=\frac{\mu_1^2}{2}e^{\mu_1 t}-\frac{c\mu_1}{2}e^{-\mu_1(t+r_3)}+\alpha_2\left(1-\frac{1}{2}e^{-\mu_1(t+r_3)}\right)\left[1-\left(1-\frac{1}{2}e^{-\mu_1(t+r_3)}\right)-\frac{b}{2}e^{\lambda_1(t+r_3-r_4)}\right]\\
&=\frac{\mu_1^2}{2}e^{\mu_1 t}-\frac{c\mu_1}{2}e^{\mu_1(t+r_3)}+\frac{\alpha_2}{2}e^{\mu_1(t+r_3)}+\frac{c\mu_1}{2}e^{\mu_1(t+r_3)}-\frac{\alpha_2}{2}e^{\mu_1(t+r_3)}-\frac{c\mu_1}{2}e^{-\mu_1(t+r_3)}\\
&+\alpha_2\left(1-\frac{1}{2}e^{-\mu_1(t+r_3)}\right)\left[1-\left(1-\frac{1}{2}e^{-\mu_1(t+r_3)}\right)-\frac{b}{2}e^{\lambda_1(t+r_3-r_4)}\right]\\
&=c\mu_1\sinh(\mu_1r_3)-\frac{\alpha_2}{2}\sinh(\mu_1r_3)-\alpha_2\left[\frac{1}{4}e^{-2\mu_1(t+r_3)}+\frac{b}{2}e^{\lambda_1(t+r_3-r_4)}\left(1-\frac{1}{2}e^{-\mu_1(t+r_3)}\right)\right].
\end{align*}
\noindent This is less than zero as $r_3\to 0$, so we have the result.
\medskip 

{\bf Case 3: $0< t. $} Direction substitution into the first equation of Eq. \ref{LV2} yields:
\begin{align*}
&\overline{\phi}_{1}''(t)-c\overline{\phi}_{1}'(t+r_1)+\alpha_1\overline{\phi}_{1}(t+r_1)\left(1-\overline{\phi}_{1}(t+r_1)-a\underline{\phi_2}(t+(r_1-r_2))\right)\\
&=\frac{-\lambda_{1}^{2}}{2}e^{-\lambda_{1}t}-\frac{c\lambda_{1}}{2}e^{-\lambda_{1}(t+r_1)}+\alpha_1\left(1-\frac{1}{2}e^{-\lambda_{1}(t+r_1)}\right)\left(1-\left(1-\frac{1}{2}e^{-\lambda_{1}(t+r_1)}\right)\right)\\
&=\frac{-\lambda_{1}^{2}}{2}e^{-\lambda_{1}t}+\frac{c\lambda_{1}}{2}e^{-\lambda_{1}t+\lambda_{1}r_1}-\frac{\alpha_{1}}{2}e^{-\lambda_{1}t+\lambda_{1}r_1}-\frac{c\lambda_{1}}{2}e^{-\lambda_{1}t+\lambda_{1}r_1}+\frac{\alpha_{1}}{2}e^{-\lambda_{1}t+\lambda_{1}r_1}\\
&-\frac{c\lambda_{1}}{2}e^{-\lambda_{1}(t+r_1)}+\alpha_1\left(1-\frac{1}{2}e^{-\lambda_{1}(t+r_1)}\right)\left(1-\left(1-\frac{1}{2}e^{-\lambda_{1}(t+r_1)}\right)\right)\\
\end{align*}
It is clear that \[-\alpha_1\left(1-\frac{ e^{-\lambda_1t-\lambda_1 r_1}}{2}\right)\left(1-\frac{ e^{-\lambda_1 (t+r_1-r_2)}}{2}\right)\le 0.\]
Furthermore, \[\lim_{r_1\to 0} \alpha_1-c\lambda_1\cosh(\lambda_1 r_1)+\alpha_1\sinh(\lambda_1 r_1)=\alpha_1-c\lambda_0, \]
because $\lambda_1\to \lambda_0.$ A simple calculation shows
\begin{align*}
 \alpha_1-c\lambda_0=\alpha_1-c\left(\frac{c+\sqrt{c^{2}-4\alpha_1}}{2}\right)\le 0,  
\end{align*}
due to the fact that $c\ge 2\sqrt{\alpha_1}.$ This means that there exists some $r^*>0$ dependent on $c$ such that $0<r_1\le r^*(c)$ and \[\left(\alpha_1-c\lambda_1\cosh(\lambda_1 r_1)+\alpha_1\sinh(\lambda_1 r_1)\right)e^{-\lambda_1 t}-\alpha_1\left(1-\frac{ e^{-\lambda_1t-\lambda_1 r_1}}{2}\right)\left(1-\frac{ e^{-\lambda_1 (t+r_1)}}{2}\right)\le 0.\]
For the second equation we have 
\begin{align*}
&\overline{\phi}_{2}''(t)-c\overline{\phi}_{2}'(t+r_3)+\alpha_2\overline{\phi}_{2}(t+r_3)\left(1-\overline{\phi}_{2}(t+r_3)-b\overline{\phi_1}(t+r_3-r_4)\right)\\
&=\frac{-\mu_{1}^{2}}{2}e^{-\mu_{1}t}-c\frac{\mu_{1}}{2}e^{-\mu_{1}(t+r_3)}+\alpha_2\left(1-\frac{1}{2}e^{-\mu_{1}(t+r_3)}\right)\\
&\times\left(1-\left(1-\frac{1}{2}e^{-\mu_{1}(t+r_3)}\right)-b\left(1-\frac{1}{2}e^{-\lambda_{1}(t+r_3-r_4)}\right)\right)\\
&=\left(\alpha_2-c\mu_1\cosh(\mu_1 r_3)+\alpha_2\sinh(\mu_1 r_3)\right)e^{-\mu_1 t}\\
&-\alpha_2\left(\left(1-\frac{1}{2}e^{-\mu_{1}(t+r_3)}\right)+b\left(1-\frac{1}{2}e^{-\lambda_{1}(t+r_3-r_4)}\right)\right)\le 0
\end{align*}
by the same reasoning as the first equation in this case. This proves the Lemma.
\end{proof}
\begin{lemma}
 For sufficiently small $r_1, r_2$ and $c>\max\{2\sqrt{\alpha_1},2\sqrt{\alpha_2}\}$ where $\overline{\phi}_{1},\overline{\phi}_{2},\underline{\phi_1}, \underline{\phi_2}$ for all $ t\in \R$, then $\left(\underline{\phi_1}, \underline{\phi_2}\right)^T$ is a quasi-lower solution of Eq. \ref{LV2}.  
\end{lemma}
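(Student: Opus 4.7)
The plan is to check both lower-solution inequalities. Since $\underline{\phi_2}\equiv 0$, all its derivatives vanish and the second inequality collapses to $0\ge 0$ (reading the lower-solution conditions with sign $\ge$, as is consistent with the standard sub/super iteration and with the construction of $\underline{\phi_1}$), which holds trivially for every $t$. All the real work lies in verifying, for almost every $t\in\R$,
\[
\underline{\phi_1}''(t)-c\underline{\phi_1}'(t+r_1)+\alpha_1\underline{\phi_1}(t+r_1)\bigl(1-\underline{\phi_1}(t+r_1)-a\overline{\phi_2}(t+r_1-r_2)\bigr)\ge 0.
\]
Because $\underline{\phi_1}$ is piecewise given on $(-\infty,-T]$, $[-T,T]$, $[T,\infty)$, I would split $\R$ into cases according to which regions contain $t$ and $t+r_1$, mirroring the three-case structure of the upper-solution lemma.

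In the far-left case $t\le -T-r_1$, both $\underline{\phi_1}(t)$ and $\underline{\phi_1}(t+r_1)$ reduce to $\tfrac{1}{4}e^{\lambda_2\,\cdot\,}$. Direct substitution gives the linear leading term $\tfrac{e^{\lambda_2 t}}{4}\bigl[\lambda_2^{2}-c\lambda_2 e^{\lambda_2 r_1}+\alpha_1 e^{\lambda_2 r_1}\bigr]$, plus the strictly higher-order negative corrections $-\tfrac{\alpha_1}{16}e^{2\lambda_2(t+r_1)}$ and $-\tfrac{a\alpha_1}{4}e^{\lambda_2(t+r_1)}\overline{\phi_2}(t+r_1-r_2)$. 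Choosing $\lambda_2$ strictly between $0$ and the smaller positive root $(c-\sqrt{c^2-4\alpha_1})/2$ of $\lambda^2-c\lambda+\alpha_1=0$, the same Rouch\'e/continuity reasoning as in Claim \ref{claim 1} makes the bracket strictly positive for all sufficiently small $r_1$. Since $\overline{\phi_2}\le \tfrac{1}{2}e^{\mu_1(t+r_1-r_2)}$ here, the two corrections decay like $e^{2\lambda_2 t}$ and $e^{(\lambda_2+\mu_1)t}$, both strictly faster than $e^{\lambda_2 t}$; taking $T$ large forces the positive leading term to dominate.

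In the middle and gluing region $-T-r_1<t\le T$, the cubic $f(t)=a(t-T)^3+b(t-T)^2+\tfrac{1}{2}$ enters through $\underline{\phi_1}(t)$ or $\underline{\phi_1}(t+r_1)$; the matching conditions $f(-T)=\tfrac{1}{4}e^{-\lambda_2 T}$, $f'(-T)=\tfrac{\lambda_2}{4}e^{-\lambda_2 T}$, $f(T)=\tfrac{1}{2}$, $f'(T)=0$ guarantee continuity of $\underline{\phi_1}'$ across $\pm T$. The dominant contribution is the reaction term: since $0\le \underline{\phi_1}\le \tfrac{1}{2}$ and $\overline{\phi_2}\le 1$, this reaction is bounded below by $\alpha_1\underline{\phi_1}(t+r_1)(\tfrac{1}{2}-a)$, which is nonnegative under the coupling hypothesis $a\le\tfrac{1}{2}$. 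The explicit formulas for $a,b$ yield $|f''|=O(T^{-2})$ and $|f'|=O(T^{-1})$ uniformly on $[-T,T]$, so enlarging $T$ and shrinking $r_1$ absorbs the derivative contributions into the positive reaction. The right tail $t>T$ is immediate: all derivative terms vanish and the inequality reduces to $\tfrac{\alpha_1}{2}\bigl(\tfrac{1}{2}-a\overline{\phi_2}\bigr)\ge 0$, again under $a\le\tfrac{1}{2}$.

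The main obstacle is the middle region, where $f$ is not an exact solution of any auxiliary linear equation and the sign of the combined expression must be extracted from a competition between the algebraically small derivative terms and the possibly exponentially small reaction term near $t=-T$. Lining these scales up requires choosing $r_1$ small and $T$ large in a compatible order, and hinges on the implicit coupling assumption $a\le \tfrac{1}{2}$, which keeps the factor $\tfrac{1}{2}-a\overline{\phi_2}$ bounded away from zero throughout $[-T,T]$ and the right tail.
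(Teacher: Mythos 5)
Your overall route is the paper's: the second inequality is trivial because $\underline{\phi_2}\equiv 0$, and the first is checked piecewise on the regions determined by $\pm T$, with the reaction term kept nonnegative through smallness of the coupling (you state $a\le\tfrac12$ explicitly; the paper needs exactly this in its Case 3, where the expression is $\tfrac{\alpha_1}{2}\bigl(\tfrac12-a\overline{\phi}_2\bigr)$, but never says so, and flagging it is an improvement). Your far-left case, however, deviates from the paper in the choice of $\lambda_2$, and problematically so. The paper's Case 1 silently uses that $\lambda_2$ is the real root, for small $r_1$, of $\lambda^2-c\lambda e^{\lambda r_1}+\tfrac{\alpha_1}{2}e^{\lambda r_1}=0$ perturbing the larger root of $\lambda^2-c\lambda+\tfrac{\alpha_1}{2}$: that is what lets it drop the bracket exactly, and it keeps $\lambda_2>\lambda_1$. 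Your choice $0<\lambda_2<(c-\sqrt{c^2-4\alpha_1})/2$ does make your bracket positive and dominant on $t\le -T-r_1$, but it forces $\lambda_2<\lambda_1$, and then $\underline{\phi_1}(t)=\tfrac14 e^{\lambda_2 t}$ eventually exceeds $\overline{\phi}_1(t)=\tfrac12 e^{\lambda_1 t}$ as $t\to-\infty$, destroying the ordering $\underline{\phi_1}\le\overline{\phi}_1$ needed for the pair to lie in $\Gamma_2$ and to feed the main theorem. Taking $\lambda_2$ above $\lambda_0$ (or the paper's implicit root) restores both your positivity argument and the ordering.

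The serious gap is the middle region, and it is not cured by taking $T$ large and $r_1$ small in any order. Put $w=t+T$; the explicit coefficients give $a\approx-\tfrac{1}{8T^3}$, $b\approx-\tfrac{3}{8T^2}$ up to exponentially small terms, so for $0\le w\ll T$ one finds $f''(t)\approx\tfrac{3}{4T^2}$, $f'(t+r_1)\approx\tfrac{3(w+r_1)}{4T^2}$, and, since $f(-T)=\tfrac14e^{-\lambda_2 T}$ is exponentially small, $f(t+r_1)\approx\tfrac{3(w+r_1)^2}{8T^2}$. Hence the quantity to be shown nonnegative is approximately $\tfrac{3}{4T^2}\bigl(1-cw+\tfrac{\alpha_1}{2}w^2\bigr)$. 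Because $c>2\sqrt{\alpha_1}$ implies $c^2>2\alpha_1$, this quadratic is negative on a window of $w$ of width comparable to $c/\alpha_1$, independent of $T$, $r_1$ and $a$; for instance $\alpha_1=1$, $c=3$, $a=0$, $T=100$, $w=3$ gives a value near $-2.6\times10^{-4}$. So on that window the reaction cannot absorb $-cf'(t+r_1)$: your absorption step fails exactly at the spot you yourself identified as the main obstacle, and your proposed remedy (compatible choices of $T$, $r_1$ and $a\le\tfrac12$) does not repair it. To be clear, the paper's own Case 2 is no better: its sup-norm display says nothing about the sign of the expression. Closing the lemma requires changing the verification or the construction near the left gluing point, e.g.\ a lower solution whose negative derivative contribution is dominated pointwise by the reaction (of the classical type $e^{\lambda_2 t}-qe^{\eta\lambda_2 t}$ truncated at $0$), rather than a cubic bridge across an interval of length $2T$.
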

\begin{proof} We will show this Lemma in cases like above. In fact, we only need to show that the inequality is satisfied for the first equation in the model due to the fact for all $t\in \R$ the second equation always is zero. This is shown by
\begin{align*}
&\underline{\phi}_{2}''(t)-c\underline{\phi}_{2}'(t+r_3)+\alpha_2\underline{\phi}_{2}(t+r_3)\left(1-\underline{\phi}_{2}(t+r_3)-b\underline{\phi_1}(t+r_3-r_4)\right)=0.
\end{align*} 

\noindent
\medskip
{\bf Case 1: $t\le -T- r. $} Direction substitution into the first equation of Eq. \ref{LV2} yields:
\begin{align*}
 &\underline{\phi}_{1}''(t)-c\underline{\phi}_{1}'(t+r_1)+\alpha_1\underline{\phi}_{1}(t+r_1)\left(1-\underline{\phi}_{1}(t+r_1)-a\overline{\phi_2}(t+r_1-r_2)\right)\\
&=  \left[ \underline{\phi}_{1}^{\prime\prime}(t)-c\underline{\phi}_{1}^{\prime}(t+r_1)+\frac{\alpha_1}{2}  \underline{\phi}_{1}(t+r_1)\right]
+ \alpha_1\underline{\phi}_{1}(t+r_1)\left(
\frac{1}{2}-\underline{\phi}_{1}(t+r_1)-a\overline{\phi_2}(t+r_1-r_2)\right) \\
&=e^{\lambda_2 t}\left[\lambda_2^2-c\lambda_2e^{\lambda_2 r_1} +\frac{\alpha_1 }{2}e^{\lambda_2 r_1} \right]
+ \alpha_1\underline{\phi}_{1}(t+r_1)\left(
\frac{1}{2}-\underline{\phi}_{1}(t+r_1)-a\overline{\phi_2}(t+r_1-r_2)\right)\\
&=\alpha_1\underline{\phi}_{1}(t+r_1)\left(
\frac{1}{2}-\underline{\phi}_{1}(t+r_1)-a\overline{\phi_2}(t+r_1-r_2)\right).
\end{align*}
 However, we can take $T$ to be sufficiently large enough such that \[\alpha_1\underline{\phi}_{1}(t+r_1)\left(
\frac{1}{2}-\underline{\phi}_{1}(t+r_1)-a\overline{\phi_2}(t+r_1-r_2)\right)\ge 0.\]
\noindent
\medskip
{\bf Case 2: $-r-T< t\le T . $} Direction substitution into the first equation of Eq. \ref{LV2} yields:
\begin{align*}
 &\underline{\phi}_{1}''(t)-c\underline{\phi}_{1}'(t+r_1)+\alpha_1\underline{\phi}_{1}(t+r_1)\left(1-\underline{\phi}_{1}(t+r_1)-a\overline{\phi_2}(t+r_1-r_2)\right)\\
&= \frac{1}{4}\left( f''(t)-cf'(t+r_1) +\alpha_1 f(t+r_1)\left(1-\frac{f(t+r_1)}{4}-a\overline{\phi_2}(t+r_1-r_2)\right)\right).
\end{align*}
Again, we can take $T$ large enough such that \[\alpha_1 f(t+r_1)\left(1-\frac{f(t+r_1)}{4}-a\overline{\phi_2}(t+r_1-r_2)\right)\ge 0.\]
In \cite{BarkNguy} it was noted that on this interval
\begin{align*}
&= \sup_{-T-r_1 \le t\le T}  |\underline{\phi}_{1}^{\prime\prime}(t)|+c|\underline{\phi}_{1}^{\prime}(t+r_1)|\\
&= \sup_{-T\le t\le T} |f'(t)|+c|f''(t)|,
\end{align*}
so we have that 
\[\frac{1}{4}\left( f''(t)-cf'(t+r_1) +\alpha_1 f(t+r_1)\left(1-\frac{f(t+r_1)}{4}-a\overline{\phi_2}(t+r_1-r_2)\right)\right)\ge 0.\]

\noindent
\medskip
{\bf Case 3: $ t \ge T . $} Direction substitution into the first equation of Eq. \ref{LV2} yields:
\begin{align*}
 &\underline{\phi}_{1}''(t)-c\underline{\phi}_{1}'(t+r_1)+\alpha_1\underline{\phi}_{1}(t+r_1)\left(1-\underline{\phi}_{1}(t+r_1)-a\overline{\phi_2}(t+r_1-r_2)\right)\\
&=\frac{\alpha_1}{2}\left(1-\frac{1}{2}-a\overline{\phi_2}(t+r_1-r_2)\right)\ge 0.
\end{align*}
This concludes the proof.
\end{proof}
\begin{corollary}
Assume that $c>\max\{2\sqrt{\alpha_1},2\sqrt{\alpha_2}\}$ is given. Then, the system (\ref{LV2}) has a traveling wave solution $(u(x,t),v(x,t))^T=\Phi=\left({\phi(x+ct)}, {\psi(x+ct)}  \right)$ for sufficiently small delays $\tau_1,\tau_2$.
\end{corollary}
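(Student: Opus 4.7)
The plan is to invoke the main existence result of Section~3 (the Corollary stating that quasi-upper/lower solutions produce a monotone traveling wave) by verifying its hypotheses for the specific Lotka--Volterra system \eqref{LV2}. The preceding lemmas in this section have already assembled most of the required ingredients; the role of this Corollary is essentially to collect them into a statement about the original PDE.

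First I would check (C1) and (C2) for the reaction terms $f_1, f_2$ given in the opening Lemma of this section. (C1) is immediate: $f(\hat 0) = 0$, and for $K=(k_1,k_2)$ determined by the positive coexistence equilibrium $1-k_1-ak_2=0$, $1-k_2-bk_1=0$ (which lies in the positive quadrant under the standard condition $ab<1$), one has $f(\hat K)=0$. (C2) follows because $f_1, f_2$ are polynomials in their arguments restricted to the bounded box $[0,k_1]\times[0,k_2]$, so they are Lipschitz with constants $L_1, L_2$ depending only on $\alpha_i, a, b, k_1, k_2$. The partial quasi-monotone condition (PQM) was established in the first Lemma of this section.

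Next I would assemble the upper/lower pair. Claims~\ref{claim 1} and~\ref{claim 2}, together with the assumption $c>\max\{2\sqrt{\alpha_1},2\sqrt{\alpha_2}\}$ and sufficiently small $r_1, r_3$, provide the positive real roots $\lambda_1(r_1)$ and $\mu_1(r_3)$ of the characteristic equations \eqref{C1}--\eqref{C2} from which $\overline\phi_1, \overline\phi_2, \underline\phi_1, \underline\phi_2$ are constructed. The Claim immediately preceding the two lemmas of this section places the pair in $\Gamma_2$, and the two lemmas themselves verify that $(\overline\phi_1,\overline\phi_2)^T$ and $(\underline\phi_1,\underline\phi_2)^T$ are, respectively, quasi-upper and quasi-lower solutions of \eqref{LV2}, provided all delays are sufficiently small. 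With every hypothesis satisfied, the Corollary at the end of Section~3 produces a monotone profile $(\phi,\psi)$ solving \eqref{LV3}; undoing the wave transformation $\xi=x+ct$ yields the traveling wave $(u(x,t),v(x,t))=(\phi(x+ct),\psi(x+ct))$ for \eqref{LV2}.

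The main obstacle I expect is bookkeeping on the smallness of the delays rather than a new conceptual step. Each preceding lemma imposes its own upper bound on the $\tau_i$: Claims~\ref{claim 1}--\ref{claim 2} require $r_1, r_3$ small enough for Rouch\'e's theorem to isolate $\lambda_1, \mu_1$ near $\lambda_0, \mu_0$; the quasi-upper solution lemma requires $r_1, r_3$ small enough that the $\sinh(\lambda_1 r_1)$ and $\sinh(\mu_1 r_3)$ terms in Cases~2--3 are dominated by the negative quadratic/exponential contributions; and the quasi-lower solution lemma requires $T$ large enough (and the corresponding $r_i$ small enough) to make the bracketed factor in Case~1 nonnegative. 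The corollary therefore follows once the final $\tau_i$ is taken to be the minimum of all these thresholds, which is positive by the finiteness of the list.
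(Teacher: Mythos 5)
Your proposal is correct and follows essentially the same route the paper intends: the corollary is stated without an explicit proof precisely because it is the assembly of the (PQM) lemma, Claims \ref{claim 1}--\ref{claim 2}, the membership of the constructed pair in $\Gamma_2$, and the two quasi-upper/lower solution lemmas, fed into the Section~3 corollary that quasi-solutions yield a monotone traveling wave, with the wave transformation undone at the end. Your added bookkeeping on taking the minimum of the smallness thresholds for the delays, and the explicit check of (C1)--(C2), are consistent with (and slightly more careful than) what the paper leaves implicit.
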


\newpage
\bibliographystyle{amsplain}

\end{document}